\def\xC{{\mathrm C}}
\def\xL{{\mathrm L}}
\def\xBV{{\mathrm BV}}
\def\xR{{\mathbb R}}
\newcommand{\bfj}{{\boldsymbol j}}
\newcommand{\bfK}{{\boldsymbol K}}
\newcommand{\bfL}{{\boldsymbol L}}
\newcommand{\bfM}{{\boldsymbol M}}
\newcommand{\bfn}{{\boldsymbol n}}
\newcommand{\bfu}{{\boldsymbol u}}
\newcommand{\bfx}{{\boldsymbol x}}
\newcommand{\dive}{{\rm div}}
\newcommand{\grad}{{\boldsymbol \nabla}}
\newcommand{\gradi}{{\boldsymbol \nabla}}
\newcommand{\norm}[1]{{\lVert #1 \rVert}}
\newcommand{\normtbv}[1]{{\lVert #1 \rVert_{\mathcal{T},t,\xBV}}}
\newcommand{\normxbv}[1]{{\lVert #1 \rVert_{\mathcal{T},x,\xBV}}}
\newcommand{\edge}{\sigma}
\newcommand{\edges}{{\mathcal E}}
\newcommand{\edgesint}{{\mathcal E}_{{\rm int}}}
\newcommand{\edgesext}{{\mathcal E}_{{\rm ext}}}
\newcommand{\mesh}{{\mathcal M}}
\newcommand{\bli}{\begin{list}{-}{\itemsep=1ex \topsep=1ex \leftmargin=0.5cm \labelwidth=0.3cm \labelsep=0.2cm \itemindent=0.cm}}
\newcommand{\dt}{\,{\rm d}t}
\newcommand{\dx}{\,{\rm d}\bfx}
\newcommand{\eg}{\emph{e.g.}}
\newcommand{\ie}{\emph{i.e.}}
\newcommand{\li}{|\hspace{-0.12em}[}
\newcommand{\ri}{]\hspace{-0.12em}|}
\begin{document}
\title{Consistent Internal Energy Based Schemes for the Compressible Euler Equations}
\titlerunning{Internal Energy Based Schemes for the Euler Equations}

\author{R. Herbin\inst{1} \and
T. Gallou\"et$^1$ \and J.-C. Latch\'e\inst{2} \and N. Therme$^2$}
\authorrunning{R. Herbin {\it et al.}}
\institute{Aix-Marseille Universit\'e, France\\
\email{thierry.gallouet@univ-amu.fr, raphaele.herbin@univ-amu.fr}\\
\and
Institut de Radioprotection et de S\^uret\'e Nucl\'eaire (IRSN), France\\
\email{jean-claude.latche@irsn.fr, ntherme@gmail.com}}
\maketitle
\begin{abstract}
Numerical schemes for the solution of the Euler equations have recently been developed, which involve the discretisation of the internal energy equation, with corrective terms to ensure the correct capture of shocks, and, more generally, the consistency in the Lax-Wendroff sense. 
These schemes may be staggered or colocated, using either structured meshes or general simplicial or tetrahedral/hexahedral meshes.
The time discretization is performed by fractional-step algorithms; these may be either based on semi-implicit pressure correction techniques or segregated in such a way that only explicit steps are involved (referred to hereafter as "explicit" variants).
In order to ensure the positivity of the density, the internal energy and the pressure, the discrete convection operators for the mass and internal energy balance equations are carefully designed; they use an upwind technique with respect to the material velocity only.
The construction of the fluxes thus does not need any Riemann or approximate Riemann solver, and yields easily implementable algorithms.
The stability is obtained without restriction on the time step for the pressure correction scheme and under a CFL-like condition for explicit variants: preservation of the integral of the total energy over the computational domain, and positivity of the density and the internal energy.
The semi-implicit first-order upwind scheme satisfies a local discrete entropy inequality.
If a MUSCL-like scheme is used in order to limit the scheme diffusion, then a weaker property holds: the entropy inequality is satisfied up to a remainder term which is shown to tend to zero with the space and time steps, if the discrete solution is controlled in $\xL^\infty$ and BV norms.
The explicit upwind variant also satisfies such a weaker property, at the price of an estimate for the velocity which could be derived from the introduction of a new stabilization term in the momentum balance.
Still for the explicit scheme, with the above-mentioned MUSCL-like scheme, the same result only holds if the ratio of the time to the space step tends to zero.

\keywords{compressible flows \and Euler equations  \and internal energy \and pressure correction \and segregated algorithms \and entropy estimates.}
\end{abstract}
%
% --------------------------------------------------------------------------------------------------------
%
\section{Introduction} \label{sec:intro}

We address in this paper the solution of the Euler equations for an ideal gas, which read:
\begin{subequations}
\begin{align}\label{eq:pb_mass} &
\partial_t \rho + \dive( \rho\, \bfu) = 0,
\\[1ex] \label{eq:pb_mom} &
\partial_t (\rho\, \bfu) + \dive(\rho\, \bfu \otimes \bfu) + \grad p= 0,
\\[1ex] \label{eq:pb_Etot} &
\partial_t (\rho\, E) + \dive(\rho \, E \, \bfu) + \dive ( p \, \bfu )=0,
\\ \label{eq:pb_etat} &
 p=(\gamma-1)\, \rho\, e, \qquad E=\frac 1 2|\bfu|^2+e,
\end{align} \label{eq:pb}\end{subequations}
where $t$ stands for the time, $\rho$, $\bfu$, $p$, $E$ and $e$ are the density, velocity, pressure, total energy and internal energy respectively, and $\gamma > 1$ is a coefficient specific to the considered fluid.
The problem is supposed to be posed over $\Omega \times (0,T)$, where $\Omega$ is an open bounded connected subset of $\xR^d$, $1\leq d \leq 3$, and $(0,T)$ is a finite time interval.
System \eqref{eq:pb} is complemented by initial conditions for $\rho$, $e$ and $\bfu$, let us say $\rho_0$, $e_0$ and $\bfu_0$ respectively, with $\rho_0 >0$ and $e_0>0$, and by suitable boundary conditions  which we suppose to be $\bfu \cdot \bfn=0$ at any time and {\em a.e.} on $\partial\Omega$, where $\bfn$ stands for the normal vector to the boundary.

\medskip
Finite volume schemes for the solution of hyperbolic problems such as the system \eqref{eq:pb} generally use a collocated arrangement of the unknowns, which are associated to the cell centers, and apply a Godunov-like technique for the computation of the fluxes at the cells faces: the face is seen as a discontinuity line for the beginning-of-time-step numerical solution, supposed to be constant in the two adjacent cells; the value of the solution of the so-posed Riemann problem  on the discontinuity line is computed, either exactly or approximately; the numerical solution at the end-of-time-step is computed with these values, and is a piecewise constant function (see \eg\ \cite{tor-09-rie,bou-04-non} for the development of such solvers).
In one space dimension, this method consists, at least for exact Riemann solvers, in  a projection of the exact solution.
Then, thanks to the properties of the projection, this process applied to the Euler equations yields consistent schemes which preserve the non-negativity of the density and the internal energy and, for first-order variants, satisfy an entropy inequality.
The price to pay is the computational cost of the evaluation of the fluxes, and the fact that this issue is intricate enough to put almost out of reach implicit-in-time formulations, which would allow to relax CFL time step constraints.
In addition, preserving the accuracy for low Mach number flows is a difficult task (see \eg\ \cite{gui-06-rec} and references herein).

\begin{figure}[tb]
\begin{center}
\scalebox{0.85}{
\newgray{grayml}{.9}
\newgray{grayc}{.97}
\psset{unit=.9cm}
\begin{pspicture}(0,0)(17,7)
%
% RT and CR elements:
\rput[bl](-0.5,1){
   \pspolygon*[linecolor=grayml](3.2,3)(6,2)(6.8,3.5)(5.4,5)
   \rput[bl](4.6,3.5){{$D_\edge$}}
   \psccurve[fillstyle=solid, fillcolor=grayc, linecolor=grayc](3.2,3)(3.2,3)(6,2)(6,2)(3.7,0.8)(1,1)(1,1)
   \rput[bl](3.2,0.9){{$D_{\edge'}$}}
   \rput[bl]{10}(2.6,1.4){$\edge'$}
   \psline[linecolor=black, linewidth=1.5pt]{-}(1,1)(6,2)(5.4,5)(0.4,4)(1,1)
   \psline[linecolor=black, linewidth=0.5pt]{-}(1,1)(5.4,5)
   \psline[linecolor=black, linewidth=0.5pt]{-}(6,2)(0.4,4)
   \rput[bl](1.1,1.6){{$\bfK$}}
   \psline[linecolor=black, linewidth=1.5pt]{-}(6,2)(9,3.5)(5.4,5)
   \psline[linecolor=black, linewidth=0.5pt]{-}(6,2)(6.8,3.5)
   \psline[linecolor=black, linewidth=0.5pt]{-}(9,3.5)(6.8,3.5)
   \psline[linecolor=black, linewidth=0.5pt]{-}(5.4,5)(6.8,3.5)
   \rput[bl](6.6,2.5){{$\bfL$}}
   \psline[linecolor=black, linewidth=1.5pt]{-}(1,1)(6,2)(4,-1)(2,-1)(1,1)
   \pscurve[linecolor=black, linewidth=0.5pt]{-}(4,-1)(4.2,0.2)(3.7,0.8)
   \pscurve[linecolor=black, linewidth=0.5pt]{-}(2,-1)(2.5,0.)(3.7,0.8)
   \pscurve[linecolor=black, linewidth=0.5pt]{-}(1,1)(3.7,0.8)(6,2)
   \rput[bl](2.1,-0.9){{$\bfM$}}
   \rput[bl]{-79}(5.9,3){$\edge$}
   \psline[linecolor=blue, linewidth=1.5pt]{->}(5.7,3.5)(6.5,3.5)   \psline[linecolor=blue, linewidth=1.5pt]{->}(5.7,3.5)(5.7,4.3)
   \psline[linecolor=blue, linewidth=1.5pt]{->}(3.5,1.5)(4.3,1.5)   \psline[linecolor=blue, linewidth=1.5pt]{->}(3.5,1.5)(3.5,2.3)
   \psline[linecolor=blue, linewidth=1.5pt]{->}(2.9,4.5)(3.7,4.5)   \psline[linecolor=blue, linewidth=1.5pt]{->}(2.9,4.5)(2.9,5.3)
   \psline[linecolor=blue, linewidth=1.5pt]{->}(0.7,2.5)(1.5,2.5)   \psline[linecolor=blue, linewidth=1.5pt]{->}(0.7,2.5)(0.7,3.3)
   \psline[linecolor=blue, linewidth=1.5pt]{->}(7.5,2.75)(8.3,2.75) \psline[linecolor=blue, linewidth=1.5pt]{->}(7.5,2.75)(7.5,3.55)
   \psline[linecolor=blue, linewidth=1.5pt]{->}(7.2,4.25)(8,4.25)   \psline[linecolor=blue, linewidth=1.5pt]{->}(7.2,4.25)(7.2,5.05)
   \psline[linecolor=blue, linewidth=1.5pt]{->}(5,0.5)(5.8,0.5)     \psline[linecolor=blue, linewidth=1.5pt]{->}(5,0.5)(5,1.3)
   \psline[linecolor=blue, linewidth=1.5pt]{->}(3,-1)(3.8,-1)       \psline[linecolor=blue, linewidth=1.5pt]{->}(3,-1)(3,-0.2)
   \psline[linecolor=blue, linewidth=1.5pt]{->}(1.5,0)(2.3,0)       \psline[linecolor=blue, linewidth=1.5pt]{->}(1.5,0)(1.5,0.8)
}
%
% MAC :
\rput[bl](9,0){
   \pspolygon*[linecolor=grayml](1,1.5)(6,1.5)(6,4)(1,4)
   \rput[bl](1.1,3.5){{$D_\edge$}}
   \psline[linecolor=black, linewidth=2pt]{-}(1,2.5)(6,2.5)(6,5.5)(1,5.5)(1,2.5)
   \rput[bl](1.1,5.1){{$\bfK$}}
   \psline[linecolor=black, linewidth=2pt]{-}(1,0.5)(6,0.5)(6,2.5)(1,2.5)(1,0.5)
   \rput[bl](1.1,0.6){{$\bfL$}}
   \rput(3.1,2.7){$\edge$}
   \psline[linecolor=blue, linewidth=1.5pt]{->}(1,4)(1.8,4)
   \psline[linecolor=blue, linewidth=1.5pt]{->}(1,1.5)(1.8,1.5)
   \psline[linecolor=blue, linewidth=1.5pt]{->}(6,4)(6.8,4)
   \psline[linecolor=blue, linewidth=1.5pt]{->}(6,1.5)(6.8,1.5)
   \psline[linecolor=blue, linewidth=1.5pt]{->}(3.5,5.5)(3.5,6.3)
   \psline[linecolor=blue, linewidth=1.5pt]{->}(3.5,2.5)(3.5,3.3)
   \psline[linecolor=blue, linewidth=1.5pt]{->}(3.5,0.5)(3.5,1.3)
}
\end{pspicture}
}
\caption{Meshes and unknowns
-- Left: unstructured discretizations (the present sketch illustrates the possibility, implemented in our software CALIF$^3$S  \cite{califs}, of mixing simplicial and quadrangular cells); scalars variables are associated to the primal cells (here $K$, $L$ and $M$) while velocity vectors are associated to the faces (here, $\edge$ and $\edge'$) or, equivalently, to dual cells (here, $D_\edge$ and $D_{\edge'}$).
-- Right: MAC discretization; scalars variables are associated to the primal cells and each face is associated to the component of the velocity normal to the face.}
\label{fig:mesh}
\end{center}
\end{figure}
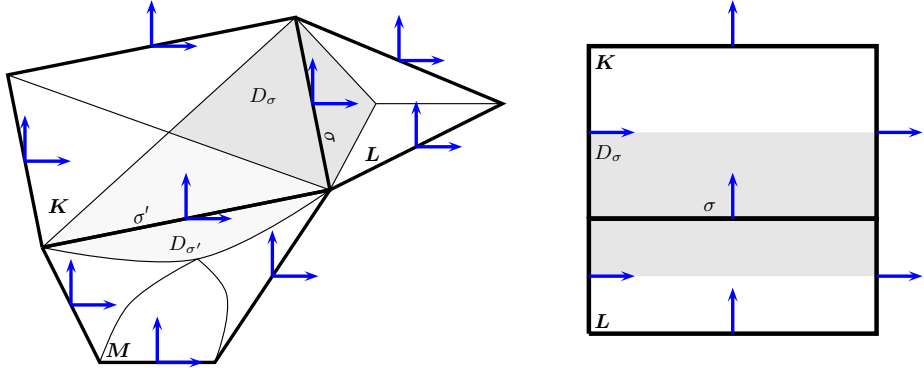

\medskip
The aim here is  first to review some recent schemes which follow a different route, and then prove some  discrete entropy estimates and/or consistency results for these schemes.
The space discretization may be colocated \cite{her-19-cel} or staggered \cite{her-14-ons,gra-16-unc,her-18-cons}:  in the colocated case, all unknowns are located at the center of the discretization cells, while in the staggered case, scalar variables are associated to cell centers while the velocity is associated to the faces, or, equivalently, to staggered mesh(es).
The use of staggered discretization for compressible flows goes back to the MAC scheme \cite{har-71-num}, and has been the subject of a wide litterature (see \cite{wes-01-pri} for a textbook and references in \cite{her-14-ons,gra-16-unc,her-18-cons}).
 Staggered discretizations have been preferred in the open source CALIF$^3$S \cite{califs} used for nuclear safety applications because the resulting semi-implicit schemes are asymptotically stable in the low Mach number regime \cite{her-19-low}. 
Two different staggered space discretizations may be considered: either the so-called Marker-And-Cell (MAC) scheme for structured grids \cite{har-65-num} or, for general meshes, a space discretization using degrees of freedom similar to low-order Rannacher-Turek \cite{ran-92-sim} or Crouzeix-Raviart \cite{cro-73-con} finite elements (see Figure \ref{fig:mesh}).
With this space discretization, the use of Riemann solvers seems difficult (scalar unknowns and velocities may still be considered as piecewise constant functions, but not associated to the same partition of the computational domain).
The positivity of the internal energy is thus ensured by a non-standard argument: the internal energy balance is discretized instead of the actual (total) energy balance \eqref{eq:pb_Etot} by a positivity-preserving scheme.
This strategy is known to lead to consistency problems (wrong shock speeds for instance), which are circumvented by some correction terms in the discrete internal energy correction.
Until now, the use of the internal energy equation associated to a consistency correction seems to be restricted to the context of Lagrangian approaches, up to a very recent work implementing a Lagrange-remap technique on staggered meshes \cite{dak-19-hig}, and some recent developements extending the techniques developped here to more genral meshes \cite{llo-18-sch}.
Two time discretizations are proposed: a pressure correction technique and a segregated scheme involving only explicit steps.
The resulting schemes offer many interesting properties: both the density and internal energy positivity are preserved, unconditionally for the pressure correction scheme and under CFL-like conditions for the segregated explicit variant, and the integral of the total energy on the computational domain is conserved (which yields a stability result); the construction of the fluxes simply relies on standard upwinding techniques of the convection operators with respect to the material velocity; finally, the space approximation, the fluxes and the choice of the internal energy balance are consistent with usual discretizations of quasi-incompressible flows, so the pressure correction scheme is asymptotic preserving  by construction in the limit of vanishing low Mach number flows (see \cite{her-19-low} for a study in the case of the barotropic Euler equations).

In addition, a discrete entropy estimate is obtained for the (upwind) pressure correction scheme, while only a conditional weak entropy estimate seems to hold for the segregated explicit variant.
Note that the schemes studied here belong to a class often referred to as "flux splitting schemes" in the literature, since they may be obtained by splitting the system by a two-step technique (usually into a "convective" and "acoustic" part), applying a standard scheme to each part (which, for the convection system, indeed yields, at first order, an upwinding with respect to the material velocity) and then summing both steps to obtain the final flux.
Works in this direction may be found in \cite{ste-81-flu,lio-93-new,zha-93-num,lio-06-seq,tor-12-flu}, and we hope that the discussion presented on the entropy may be extended in some way to these numerical methods.

The paper is organised as follows; 
Section \ref{sec:schemes} is devoted to the derivation of the previously mentioned internal energy based schemes in the semi-discrete time setting. 
Section \ref{sec:entropy} presents some new and original results concerning some entropy estimates and/or entropy consistency which hold for both the colocated and staggered schemes, first for implicit schemes, and then for explicit schemes.
%
% --------------------------------------------------------------------------------------------------------------------------------------------------------------------------------------------
%
\section{Derivation of the numerical schemes} \label{sec:schemes}
\subsection{A basic result on convection operators}\label{sec:convection}

Let $\rho$ and $\bfu$ be regular respectively scalar and vector-valued functions such that
\[
\partial_t \rho + \dive(\rho \bfu)=0.
\]
Let $z$ be a regular scalar function.
Then
\begin{equation}\label{eq:tr}
\begin{array}{ll}
\mathcal{C}(z) = \partial_t(\rho z) + \dive(\rho z \bfu)
&
= \rho \bigl(\partial_t z + \bfu \cdot \grad z \bigr) + z \bigl(\partial_t \rho + \dive(\rho \bfu) \bigr)
\\[1ex] &
= \rho \bigl(\partial_t z + \bfu \cdot \grad z \bigr).
\end{array}
\end{equation}
Let $\varphi$ be a regular real function.
Then:
\[
\varphi'(z)\ \mathcal C(z)= \varphi'(z)\, \rho\, \bigl(\partial_t z + \bfu \cdot \grad z \bigr)
= \rho\, \bigl(\partial_t \varphi(z) + \bfu \cdot \grad \varphi(z) \bigr).
\]
Now, reversing the computation performed in Relation \eqref{eq:tr} with $\varphi(z)$ instead of $z$ leads to
\begin{equation}\label{eq:cvz}
\varphi'(z)\ \mathcal C(z)= \partial_t \bigl(\rho \varphi(z) \bigr) + \dive \bigl(\rho \varphi(z)\, \bfu \bigr).
\end{equation}
The following lemma states a time semi-discrete version of this computation.

\begin{lemma}[Convection operator]\label{lmm:tr}
	Let $\rho^n$, $\rho^{n+1}$, $z^n$ and $z^{n+1}$ be regular scalar functions, let $\bfu$ be a regular vector-valued function and let $\varphi$ be a twice-differentiable real function.
	Let us suppose that
	\begin{equation}
		\label{eq:lm_mass}
		\frac 1 {\delta t}\ (\rho^{n+1}-\rho^n) + \dive(\rho^{n+1}\bfu)=0,
	\end{equation}
	with $\delta t$ a positive real number.
	Then
	\begin{multline} \label{ident-convec}
		\varphi'(z^{n+1})\ \bigl[\frac 1 {\delta t}\ (\rho^{n+1} z^{n+1}-\rho^n z^n) + \dive(\rho^{n+1} z^{n+1} \bfu)\bigr]
		\\
		=\frac 1 {\delta t}\ \Bigl(\rho^{n+1} \varphi(z^{n+1})-\rho^n \varphi(z^n)\Bigr) + \dive \bigl(\rho^{n+1} \varphi(z^{n+1})\, \bfu \bigr) + \mathcal R^n,
	\end{multline}
	with
	\[
		\mathcal R^n = \frac 1 {2\, \delta t} \rho^n \varphi''(\bar z)\,(z^{n+1}-z^n)^2,\quad \bar z = \theta z^n +(1-\theta)z^{n+1},\quad \theta \in [0,1].
	\]
\end{lemma}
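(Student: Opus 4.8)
The plan is to reproduce, at the semi-discrete level, the two-step manipulation of \eqref{eq:tr}--\eqref{eq:cvz}: first rewrite the discrete convection operator in a non-conservative form in which the mass balance \eqref{eq:lm_mass} eliminates one term, then multiply by $\varphi'(z^{n+1})$, and finally reverse the manipulation on $\varphi(z)$. The only genuinely new ingredient, compared with the continuous identity, is that the discrete chain rule in time is merely approximate and produces the remainder $\mathcal R^n$.

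First I would split the discrete operator appearing in the left-hand bracket of \eqref{ident-convec}. Writing
\[
\rho^{n+1} z^{n+1}-\rho^n z^n = \rho^n\,(z^{n+1}-z^n) + z^{n+1}\,(\rho^{n+1}-\rho^n),
\]
and using the exact spatial product rule $\dive(\rho^{n+1} z^{n+1}\bfu) = z^{n+1}\dive(\rho^{n+1}\bfu) + \rho^{n+1}\,\bfu\cdot\grad z^{n+1}$, that bracket becomes
\[
\frac 1{\delta t}\rho^n\,(z^{n+1}-z^n) + z^{n+1}\Bigl[\frac 1{\delta t}(\rho^{n+1}-\rho^n)+\dive(\rho^{n+1}\bfu)\Bigr] + \rho^{n+1}\,\bfu\cdot\grad z^{n+1}.
\]
The crucial choice is the time indexing: factoring $z^{n+1}$ (and not $z^n$) in front of the discrete mass balance makes the bracketed term vanish exactly by \eqref{eq:lm_mass}, leaving the non-conservative form $\frac 1{\delta t}\rho^n(z^{n+1}-z^n)+\rho^{n+1}\bfu\cdot\grad z^{n+1}$.

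Next I would multiply this identity by $\varphi'(z^{n+1})$. The space term transforms exactly, since $\varphi'(z^{n+1})\grad z^{n+1}=\grad\varphi(z^{n+1})$, giving $\rho^{n+1}\bfu\cdot\grad\varphi(z^{n+1})$. For the time term I would invoke Taylor's theorem with Lagrange remainder for the twice-differentiable $\varphi$, expanding $\varphi(z^n)$ about $z^{n+1}$:
\[
\varphi(z^n)=\varphi(z^{n+1})+\varphi'(z^{n+1})(z^n-z^{n+1})+\tfrac12\varphi''(\bar z)(z^n-z^{n+1})^2,
\]
with $\bar z=\theta z^n+(1-\theta)z^{n+1}$ for some $\theta\in[0,1]$. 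Rearranging and using $(z^n-z^{n+1})^2=(z^{n+1}-z^n)^2$ yields $\varphi'(z^{n+1})(z^{n+1}-z^n)=\varphi(z^{n+1})-\varphi(z^n)+\tfrac12\varphi''(\bar z)(z^{n+1}-z^n)^2$, so that $\frac1{\delta t}\rho^n\varphi'(z^{n+1})(z^{n+1}-z^n)=\frac1{\delta t}\rho^n\bigl(\varphi(z^{n+1})-\varphi(z^n)\bigr)+\mathcal R^n$, isolating precisely the remainder of the statement.

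Finally I would reverse the first step, now applied to the scalar $\varphi(z)$ in place of $z$: the identical index-splitting argument, together with \eqref{eq:lm_mass}, shows that $\frac1{\delta t}\rho^n(\varphi(z^{n+1})-\varphi(z^n))+\rho^{n+1}\bfu\cdot\grad\varphi(z^{n+1})$ coincides with the conservative form $\frac1{\delta t}(\rho^{n+1}\varphi(z^{n+1})-\rho^n\varphi(z^n))+\dive(\rho^{n+1}\varphi(z^{n+1})\bfu)$; collecting the contributions gives \eqref{ident-convec}. I expect the only delicate point to be bookkeeping rather than anything conceptual: keeping the time indices consistent so that the mass-balance term cancels on both the forward and the reverse pass, and tracking the sign of the Lagrange remainder so that $\mathcal R^n$ emerges with exactly the coefficient $1/(2\delta t)$ and argument $\bar z$ claimed in the statement.
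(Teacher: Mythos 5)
Your proposal is correct and follows exactly the paper's own argument: the same index-splitting of the discrete operator so that the mass balance \eqref{eq:lm_mass} cancels the $z^{n+1}$-factored term, multiplication by $\varphi'(z^{n+1})$ with a Taylor (Lagrange) expansion producing $\mathcal R^n$, and the reverse pass applied to $\varphi(z)$ to recover the conservative form. Your write-up is in fact slightly more explicit than the paper's proof, which only sketches these three steps.
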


\begin{proof}
We first begin by deriving a discrete analogue to Identity \eqref{eq:tr}:
\begin{equation}
\begin{array}{l}\displaystyle
\frac 1 {\delta t}\ (\rho^{n+1} z^{n+1}-\rho^n z^n) + \dive(\rho^{n+1} z^{n+1} \bfu)
\\[2ex] \displaystyle
= \frac 1 {\delta t}\ \rho^n\ (z^{n+1}- z^n) + \rho^{n+1} \bfu \cdot \grad z^{n+1}
+ z^{n+1} \Bigl[ \frac 1 {\delta t}\ (\rho^{n+1}-\rho^n) + \dive(\rho^{n+1}\bfu) \Bigr]
\\[2ex] \displaystyle
= \frac 1 {\delta t}\ \rho^n\ (z^{n+1}- z^n) + \rho^{n+1} \bfu \cdot \grad z^{n+1}.
\end{array}
\end{equation}
Then the result follows by multiplying this relation by $\varphi'(z^{n+1})$, using a Taylor expansion for the first term and the same combination of partial derivative as in the continuous case for the second term, and finally, still as in the continuous cas, by performing this computation in the reverse sense with $\varphi(z^n)$ and $\varphi(z^{n+1})$ instead of $z^n$ and $z^{n+1}$.
\end{proof}
%
% --------------------------------------------------------
%
\subsection{Internal energy formulation }

We begin with a formal reformulation of the energy equation.
Let us suppose that the solution is regular, and let $E_k$ be the kinetic energy, defined by $E_k= \frac 1 2 \, |\bfu|^2$.
Taking the inner product of \eqref{eq:pb_mom} by $\bfu$ yields, after the formal compositions of partial derivatives described in the previous section:
\begin{equation} \label{eq:pb_Ec}
\partial_t (\rho E_k) + \dive \bigl(\rho\, E_k\, \bfu \bigr) + \grad p \cdot \bfu = 0.
\end{equation}
This relation is referred to as the kinetic energy balance.
Subtracting this relation to the total energy balance \eqref{eq:pb_Etot}, we obtain the so-called internal energy balance equation:
\begin{equation}\label{eu:eq:pb_Eint}
\partial_t (\rho e) + \dive(\rho e \bfu)+ p\, \dive \bfu =0.
\end{equation}
Since,
\bli
\item as seen in the previous section, thanks to the mass balance equation, the first two terms in the left-hand side of \eqref{eu:eq:pb_Eint} may be recast as a transport operator,
\item and, from the equation of state, the pressure vanishes when $e=0$,
\end{list}
this equation implies that, if $e \geq 0$ at $t=0$ and with suitable boundary conditions, then $e$ remains non-negative at all time.
The same result would hold if \eqref{eu:eq:pb_Eint} featured a non-negative right-hand side, as for the compressible Navier-Stokes equations.
Solving  the internal energy balance \eqref{eu:eq:pb_Eint} instead of the total energy balance\eqref{eq:pb_Etot} is thus appealing, to preserve this positivity property by construction of the scheme.
In addition, it avoids introducing a space discretization for the total energy which, for a staggered discretization, combines cell-centered (internal energy and density) and face-centered (velocity) variables.
However, a raw discretization of a non-conservative equation derived from a conservative system (formally, \ie\ supposing unrealistic regularity properties of the solution) may be non-consistent (and the numerical test presented in Section \ref{sec:num} shows that, for the problem at hand, such a scheme would be unable to capture shock solutions).
To deal with this problem, we implement the following strategy:
\bli
\item First, we derive a discrete kinetic energy balance, by mimicking at the discrete level the computation leading to Equation \eqref{eq:pb_Ec}, so as to identify the terms which are likely to lead to non-consistency: the numerical diffusion in the momentum balance equation yields dissipation terms in the kinetic energy balance which are observed to behave, when the space and time step tend to zero, as measure born by the shocks which modify the jump conditions.
\item These terms are thus compensated in the internal energy balance.
\end{list}
At the fully discrete level, for staggered discretizations, the kinetic and internal energy balances are not posed on the same mesh (the dual and primal mesh respectively); however, it is possible to derive from the kinetic energy balance on the dual mesh a counterpart posed on the primal mesh and, adding to the internal energy balance yields a conservative total energy balance.
The scheme then can be proven to be consistent in the Lax-Wendroff sense to the weak form of the total energy balance: for a given sequence of discrete solutions (obtained with a sequence of discretizations whith  space and time steps tending to zero) controlled and converging to a limit in suitable norms (namely, uniformly bounded and converging in $L^r$ norms, for $r\in [1,+\infty)$), we show that the limit is a weak solution to the Euler equations \cite{her-18-cons,her-19-cons}.
In the colocated case, both kinetic and internal energy balances are posed on the same mesh and a discrete local total energy balance is easily recovered \cite{her-19-cel}.
%
% --------------------------------------------------------
%
\subsection{The time semi-discrete pressure correction scheme}

This semi-discrete pressure correction scheme takes the following general form:
\begin{subequations}
\begin{align} \label{eq:sch_pred} &
\frac 1 {\delta t} (\rho^n\, \tilde \bfu^{n+1}-\rho^{n-1}\,\bfu^n) + \dive(\rho^n\, \bfu^n \otimes \tilde \bfu^{n+1}) + \zeta^n \grad p^n= 0,
\displaybreak[1] \\[2ex] \label{eq:sch_corr} &
\frac 1 {\delta t} \rho^n\, (\bfu^{n+1}-\tilde \bfu^{n+1}) + \grad p^{n+1} - \zeta^n \grad p^n= 0,
\displaybreak[1] \\[1ex] \label{eq:sch_mass} &
\frac 1 {\delta t} (\rho^{n+1}-\rho^n) + \dive( \rho^{n+1}\, \bfu^{n+1}) = 0,
\displaybreak[1] \\[1ex] \label{eq:sch_eint} &
\frac 1 {\delta t} (\rho^{n+1}\, e^{n+1}-\rho^n\, e^n) + \dive(\rho^{n+1}\, e^{n+1}\, \bfu^{n+1}) + p^{n+1} \dive \bfu^{n+1} = S^{n+1},
\displaybreak[1] \\[1ex] \label{eq:sch_etat} &
p^{n+1}=(\gamma-1)\, \rho^{n+1}\, e^{n+1}.
\end{align} \label{eq:sch}\end{subequations}
Solving the first equation yields a tentative velocity $\tilde \bfu^{n+1}$; this is the velocity prediction step, which is decoupled from the other equations of the system.
Equations \eqref{eq:sch_corr}-\eqref{eq:sch_etat} constitute the correction step and are solved simultaneously; in the relation \eqref{eq:sch_eint}, the term $\rho^{n+1}\, e^{n+1}$ is recast as a function of the pressure only thanks to the equation of state \eqref{eq:pb_etat} and the velocity $\bfu^{n+1}$ is eliminated thanks to the divergence of \eqref{eq:sch_corr} divided by $\rho^n$. 
The result is a nonlinear and nonconservative elliptic problem for the pressure only.
This process must be performed at the fully discrete level to preserve the properties of the scheme.
The coefficient $\zeta^n$ in Equation \eqref{eq:sch_pred} and the correction term $S^{n+1}$ in \eqref{eq:sch_eint} are chosen so as to ensure stability and consistency, as shown below.
The first step of this process is to obtain a discrete kinetic energy balance.
To this purpose, let us multiply \eqref{eq:sch_pred} by $\tilde \bfu^{n+1}$ and apply Lemma \ref{lmm:tr} component by component, with $\varphi(s)=\frac 1 2 s^2$.
We get:
\begin{equation}\label{eq:ec1}
\frac 1 {2\,\delta t} \, \bigl(\rho^n\, |\tilde \bfu^{n+1}|^2-\rho^{n-1}\,|\bfu^n|^2 \bigr) 
+ \frac 1 2 \dive\bigl(\rho^n\, |\tilde \bfu^{n+1}|^2 \bfu^n \bigr) + \zeta^n \grad p^n \cdot \tilde \bfu^{n+1}  + R^n_1 = 0,
\end{equation}
with
\[
R^n_1=\frac 1 {2\,\delta t} |\tilde \bfu^{n+1}-\bfu^n|^2.
\]
Note that the mass balance equation \eqref{eq:sch_mass}, which is a fundamental assumption in Lemma \ref{lmm:tr}, only holds at this stage of the algorithm with the previous time step values, hence the shift of the time level of the density in \eqref{eq:sch_pred}.
Let us now recast Equation \eqref{eq:sch_corr} as
\[
\alpha^n \bfu^{n+1} + \frac 1 {\alpha^n} \grad p^{n+1}= \alpha^n \tilde\bfu^{n+1} + \frac{\zeta^n}{\alpha^n}\grad p^n,
\quad \alpha^n=\bigl[\frac{\rho^n}{\delta t}\bigr]^{1/2}
\]
and square this relation, to get
\begin{equation}\label{eq:ec2}
\frac 1 {2\,\delta t} \, \rho^n\, |\bfu^{n+1}|^2 + \grad p^{n+1} \cdot \bfu^{n+1} + R^n_2
=\frac 1 {2\,\delta t} \, \rho^n\, |\tilde \bfu^{n+1}|^2 + \zeta^n  \grad p^n \cdot \tilde \bfu^{n+1},
\end{equation}
with
\[
R^n_2= \frac{\delta t}{\,\rho^n} |\grad p^{n+1}|^2 - (\zeta^n)^2 \frac{\delta t}{\,\rho^n} |\grad p^n|^2.
\]
Summing \eqref{eq:ec1} and \eqref{eq:ec2} yields the kinetic energy balance that we are seeking:
\[
\frac 1 {2\,\delta t} \, \bigl(\rho^n\, |\bfu^{n+1}|^2-\rho^{n-1}\,|\bfu^n|^2 \bigr) 
+ \frac 1 2 \dive\bigl(\rho^n\, |\tilde \bfu^{n+1}|^2 \bfu^n \bigr) + \grad p^{n+1} \cdot \bfu^{n+1} + R^n_1 + R^n_2 = 0.
\]
The coefficient $\zeta^n$ is then chosen in such a way that the remainder term $R^n_2$ is a difference of two consecutive time levels of the same quantity; this is the case for
\[
\zeta^n=\bigl[ \frac{\rho^n}{\rho^{n-1}} \bigr]^{1/2}.
\]
Supposing the control in $L^1(0,T,BV)$ of the pressure and in $L^\infty$ of the pressure and of the inverse of the density, the term $R^n_2$ may thus be seen to tend with zero with the discretization parameters in a distributional sense.
The term $R^n_1$  is compensated  in the internal energy balance, by choosing $S^{n+1}=R^n_1$, thus ensuring that $S^{n+1}\geq 0$.
The definition of the time-discrete scheme is now complete.
%
% --------------------------------------------------------
%
\subsection{The fully discrete pressure correction scheme}

The fully discrete scheme is obtained from System \eqref{eq:sch} by applying the following guidelines:
\bli
\item The mass and internal energy balances (\ie\ Equations \eqref{eq:sch_mass} and \eqref{eq:sch_eint} respectively) are discretized on the primal mesh, while the velocity prediction \eqref{eq:sch_pred} and correction \eqref{eq:sch_corr} are discretized on the dual mesh(es).
The equation of state only involves cell quantities, and its expression is obtained by writing \eqref{eq:sch_etat} for these latter.

\item The space arrangement of the unknowns (density discretized at the cell and velocity at the faces) yields a natural expression of the mass fluxes in the mass balance, performed by a first-order upwind scheme (with respect to the velocity).
By construction, the density is thus non-negative; in fact at the discrete level, it remains positive if the initial density is positive.
The discrete mass balance equation on the cell $K$ whose measure is denoted by $|K|$ takes the form:
\begin{equation}\label{eq:mass_d}
\frac{|K|}{\delta t}\, (\rho_K^{n+1} - \rho_K^n) + \sum_{\edge \in \edges(K)} F^{n+1}_{K,\edge} =0,
\end{equation}
where $\edges(K)$ denotes the set of edges of $K$ and $F_{K,\edge}$ is the mass flux across $\edge$ outward $K$.

\item Let $\mathcal C_K(e^{n+1})$ denote the sum of the discrete  time-derivative and convection operator in the internal energy balance \eqref{eq:sch_eint}; this quantity reads:
\[
\mathcal C_K(e^{n+1}) = \frac{|K|}{\delta t}\, (\rho_K^{n+1} e^{n+1}_K - \rho_K^n e^n_K) + \sum_{\edge \in \edges(K)} F_{K,\edge} e^{n+1}_\edge,
\]
where $e^{n+1}_\edge$ is the upwind approximation of $e^{n+1}$ at $\edge$ with respect to $F^{n+1}_{K,\edge}$ (or, equivalently, since the density is positive, with respect to the velocity).
The structure of $\mathcal C_K(e^{n+1})$ (precisely speaking, the fact that $\mathcal C_K(e^{n+1})$ vanishes thanks to the mass balance if the internal energy $e^{n+1}$ is constant over $\Omega$) was shown in \cite{lar-91-how} to yield a positivity-preserving operator, and is also a necessary condition for a fully discrete version of Lemma \ref{lmm:tr} to hold; this is of course linked since both results rely on the possibility to recast $\mathcal C_K$ as a transport operator, and the positivity-preserving property of $\mathcal C_K$ may be proved by applying Lemma \ref{lmm:tr} with $\varphi(s)=\min(s,0)^2$.
Once again, thanks to the arrangement of the unknowns, a natural discretization for $\dive \bfu^{n+1}$ is available.
Since $p^{n+1}$ is a function of $e^{n+1}$ (given by the equation of state) which vanishes for $e^{n+1}=0$ and since the corrective term is non-negative, we are able to show that the discrete internal energy is kept positive by the scheme.

\item To allow to derive a discrete kinetic energy balance, the same structure is needed for the time-derivative and convection operator in the velocity prediction step \eqref{eq:sch_pred}.
This raises a difficulty since this equation is posed on the dual mesh, and thus we need an analogue of the mass balance \eqref{eq:mass_d} to also hold on this mesh.
The way to build the face density and the mass fluxes across the faces of the dual mesh for such a relation to hold, while still ensuring the scheme consistency, is a central ingredient of the scheme; it is detailed in \cite{gal-10-kin} for the MAC discretization and in \cite{lat-18-conv} for unstructured discretizations.

Once the face density is defined, the discretization of the coefficient $\zeta^n$ is straightforward.
In order to combine the discrete equivalents of $\bfu \cdot \grad p$ (kinetic energy balance) and $p\, \dive \bfu$ (internal energy balance), the discrete gradient is defined as the transposed of the divergence operator with respect to the $L^2$ inner product (if $\bfu \cdot \grad p + p\, \dive \bfu=\dive(p\,\bfu)$, the integral of this quantity over the computational domain vanishes when the normal velocity is prescribed to zero at the boundary).
Note that this definition is consistent with the usual treatment in the incompressible case, and is a key ingredient for the scheme to be asymptotic preserving in the limit of vanishing Mach number flows \cite{her-19-low}.
As in the incompressible case, it also allows to control the $L^2$ norm of the pressure by a weak norm of its gradient, which is central for convergence studies; with this respect, a discrete {\em inf-sup} condition is required in some sense, which is true for staggered discretizations.
\end{list}
%
% --------------------------------------------------------------------------------------------------------
%
\subsection{A segregated variant}

A variant of the proposed scheme which consists only in explicit steps (in the sense that these steps do not require the solution of any linear or non-linear algebraic system) reads, in the time semi-discrete setting:
\begin{subequations}
\begin{align}
\label{eq:sche_mass} &
\frac 1 {\delta t} (\rho^{n+1}-\rho^n) + \dive( \rho^n\, \bfu^n) = 0,
\\[1ex] \label{eq:sche_eint} &
\frac 1 {\delta t} (\rho^{n+1}\, e^{n+1}-\rho^n\, e^n) + \dive(\rho^n\, e^n\, \bfu^n) + p^n \dive \bfu^n= S^n,
\\[1ex] \label{eq:sche_etat} &
p^{n+1}=(\gamma-1)\, \rho^{n+1}\, e^{n+1},
\\[1ex]  \label{eq:sche_mom} &
\frac 1 {\delta t} (\rho^{n+1}\, \bfu^{n+1}-\rho^n\,\bfu^n) + \dive(\rho^n\, \bfu^n \otimes \bfu^n) + \grad p^{n+1}= 0.
\end{align} \label{eq:sche}\end{subequations}
The update of the pressure before the solution of the momentum balance equation is crucial in our derivation of entropy estimates (see Section \ref{sec:entropy} below).
This issue seems to be supported by numerical experiments: omitting it, we observe the appearance of non-entropic discontinuities in rarefaction waves \cite{her-18-cons}.

\medskip
The space discretization differs from the pressure correction scheme described in the above section in two points:
\bli
\item the discretization of the convection operator in the momentum balance equation \eqref{eq:sche_mom} is performed by the first order upwind scheme (still with respect to the material velocity $\bfu^n$),
\item the corrective term $S^n$ is still obtained by deriving a kinetic energy balance multiplying Equation \eqref{eq:sche_mom} by $\bfu^{n+1}$, but its expression is quite different, due to the time-level used in the convection operator.
The time-discretization is now anti-diffusive but, as usual for explicit schemes, this anti-diffusion is counterbalanced by the diffusion in the approximation of the convection (hence the upwinding) and $S^n$ is non-negative only under a CFL condition.
\end{list}
%
% --------------------------------------------------------------------------------------------------------
%
\subsection{A numerical test}\label{sec:num}

In this section, we reproduce a test performed in \cite{her-14-ons} to assess the behaviour of the scheme on a one dimensional Riemann problem.
We choose initial conditions such that the structure of the solution consists in two shock waves, separated by the contact discontinuity, with sufficiently strong shocks to allow an easy discrimination of correct numerical solutions.
These initial conditions are those proposed in \cite[chapter 4]{tor-09-rie}, for the test referred to as Test 5.
The computations are performed with the open-source software CALIF$^3$S \cite{califs}.

\medskip
The density fields obtained with $h=1/2000$ (or a number of cells $n=2000$) at $t=0.035$, with and without assembling the corrective source term in the internal energy balance, together with the analytical solution, are shown on Figure \ref{cvns:fig:ww}.
We observe that both schemes seem to converge, but the corrective term is necessary to obtain the right solution.
Without a corrective term, one can check that the obtained solution is not a weak solution to the Euler system (Rankine-Hugoniot conditions are not verified).
We also observe that the scheme is rather diffusive especially at contact discontinuities for which the beneficial compressive effect of the shocks does not apply; this may be cured in the explicit variant by implementing MUSCL-like algorithms \cite{gas-18-mus}.

\begin{figure}[htb]
\begin{center}
\scalebox{0.7}{\includegraphics*[2.2cm,1.5cm][14.5cm,10.5cm]{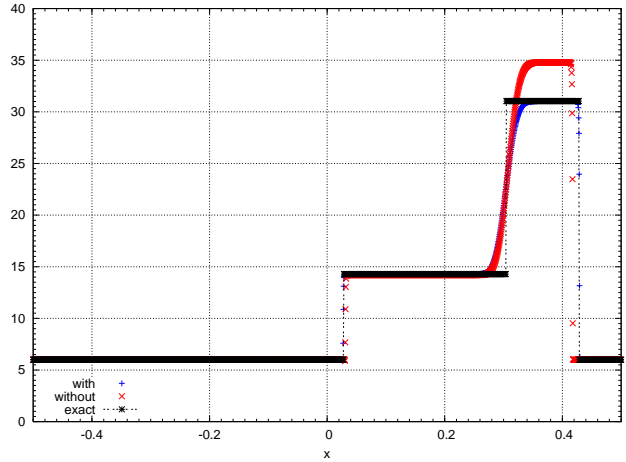}}
\end{center}
\caption{\label{cvns:fig:ww}
Test 5 of \cite[chapter 4]{tor-09-rie} - Density obtained with $n=2000$ cells, with and without corrective source terms in pressure correction scheme, and analytical solution.
}
\end{figure}

Extensive multidimensional tests were performed in both the staggered case \cite{gra-16-unc} and the colocated case \cite{her-19-cel}. 
%
% --------------------------------------------------------------------------------------------------------
%
\section{Entropy}\label{sec:entropy}

In the case of regular solutions to the Euler equations \eqref{eq:pb}, an additional conservation law can be written for an additional quantity called entropy; however, in the presence of shock waves, the (mathematical) entropy decreases. 
It is now known that weak solutions of the Euler system satisfying an entropy inequality may be non unique \cite{chi-15-wea}; nevertheless,  entropy inequalities play an important role in providing global stability estimates \cite{bou-04-non}. 

\medskip
When solving the Euler equations numerically, it is thus natural to design numerical schemes such that some entropy inequalities are satisfied by the approximate solutions; these inequalities should enable to prove that, as the mesh and time steps tend to 0, the limit of the approximate solutions, if it exists, satisfies an entropy inequality. 
A classical way of doing so is to design so-called ``entropy stable schemes" \cite{tad-16-ent}. 
Discrete entropy inequalities are known for the one dimensional case for the Godunov scheme \cite{god-59-dif} and have been derived for Roe-type schemes in the one space dimension case \cite{ism-09-aff}.
Entropy stability has also been proven in the multi-dimensional case for semi-discrete schemes on unstructured meshes \cite{mar-12-ent,ray-16-ent}.
In the sequel we show that an implicit upwind scheme (at least with the upwinding with respect to the material velocity used here) is indeed entropy stable. 
However it is not always possible to obtain entropy stability, especially for fully discrete schemes such as the explicit schemes studied below; in this case, weaker discrete entropy inequalities or estimates are obtained which allow to fulfil our goal, namely to show that the possible limits of the approximate solutions satisfy an entropy inequality. 
Such a technique was used for the convergence study of a time implicit mixed finite volume--finite element scheme for the Euler-Fourier equations \cite{fei-16-con}, with a special equation of state  which allows to obtain a priori estimates.

\medskip
Both the pressure correction scheme \eqref{eq:sch} and the segregated scheme \eqref{eq:sche} involve a discrete equivalent of the following subsystem:
\begin{subequations}\label{eq:cont}
\begin{align}\label{eq:mass} &
\partial_t \rho + \dive( \rho\, \bfu) = 0,
\\[1ex] \label{eq:e_int} &
\partial_t (\rho\, e) + \dive(\rho \, e \, \bfu) + p\, \dive (\bfu) = \mathcal R \geq 0,
\\ \label{eq:etat} &
 p=(\gamma-1)\, \rho\, e,
\end{align}\end{subequations}
with the same initial and boundary conditions as for the full system \eqref{eq:pb}.

\medskip
The derivation of an entropy for the continuous Euler system may be deduced from the subsystem \eqref{eq:cont} in the following way.
We seek an entropy function $\eta$ satisfying:
\begin{equation}\label{eq:entropy}
\partial_t \eta(\rho,e) + \dive\bigl[ \eta(\rho,e)\, \bfu \bigr] \leq 0.
\end{equation}
To this end, we introduce the functions $\varphi_\rho$ and $\varphi_e$ defined as follows:
\begin{equation}\label{eq:vphis}
\varphi_\rho(z)=z \ln(z),\quad \varphi_e(z)=\frac{-1}{\gamma -1} \ln(z), \quad \mbox{for } z >0.
\end{equation}
For regular functions, the function $\eta$ defined by
\begin{equation}\label{eq:def_eta}
\eta(\rho,e)=\varphi_\rho(\rho)+\rho \varphi_e(e)
\end{equation}
satisfies \eqref{eq:entropy}.
Indeed, multiplying \eqref{eq:mass} by $\varphi'_\rho(\rho)$, a formal computation yields:
\begin{equation}\label{eq:ent_m}
\partial_t \bigl[\varphi_\rho(\rho)\bigr] + \dive\bigl[ \varphi_\rho(\rho)\, \bfu \bigr]
+ \bigl[\rho\varphi'_\rho(\rho)-\varphi_\rho(\rho) \bigr] \dive(\bfu) = 0.
\end{equation}
Then, multiplying \eqref{eq:e_int} by $\varphi'_e(e)$ yields, once again formally, since $\varphi'_e(z) <0$ for $z >0$:
\begin{equation}\label{eq:ent_e}
\partial_t \bigl[\rho\, \varphi_e(e) \bigr] + \dive \bigl[ \rho \, \varphi_e(e)\, \bfu \bigr] + \varphi'_e(e)\, p \,\dive (\bfu) \leq 0.
\end{equation}
Summing \eqref{eq:ent_m} and \eqref{eq:ent_e} and noting that $\varphi_\rho$ and $\varphi_e$ are chosen such that
\begin{equation}\label{prop-entropie}
\rho\varphi'_\rho(\rho)-\varphi_\rho(\rho)+ \varphi'_e(e)\,p=0,
\end{equation}
we obtain \eqref{eq:entropy}, which is an entropy balance for the Euler equations, for the specific entropy defined by \eqref{eq:def_eta}.

\medskip
In the sequel we derive some analogous discrete entropy inequalities (with a possible remainder tending to 0) for the fully discrete, time semi-implicit (and fully implicit, \ie\ backward Euler, as far as System \eqref{eq:cont} only is concerned) or segregated schemes (fully explicit regarding System \eqref{eq:cont} only) presented in Section \ref{sec:schemes}, with a possible upwinding limited to that of the convection terms with respect to the material velocity.
Note that the entropy inequalities that we obtain here apply to both the staggered schemes \cite{her-14-ons,gra-16-unc,gas-18-mus} and to the colocated scheme \cite{her-19-cel} which is also based on the internal energy; indeed the entropy depends only on the mass and internal energy which are scalar unknowns located at the center of the (primal) cells in both schemes, so System \eqref{eq:cont} involves only equations posed on the primal mesh.

\medskip
Depending on the time and space discretization, we obtain three types of results:
\bli
\item local entropy estimates, {\it i.e.} discrete analogues of \eqref{eq:entropy}, in which case the scheme is entropy stable, 
\item global entropy estimates, {\it i.e.} discrete analogues of:
\begin{equation}
\frac d {dt} \int_\Omega \eta(\rho,e) \dx \leq 0; \label{eq:global-entropy}
\end{equation}
such a relation is a stability property of the scheme; this kind of relation was also proven in e.g. \cite{coq-06-sec} for a higher order scheme for the 1D Euler equations;
\item ``weak local" entropy inequalities, {\it i.e.} results of the form:
\[
\partial_t \eta(\rho,e) + \dive\bigl[ \eta(\rho,e)\, \bfu \bigr] +\widetilde{\mathcal{R}} \leq 0,  
\]
with $\widetilde{\mathcal{R}}$ tending to zero in a suitable sense with respect to the space and time discretization steps (or combination of both parameters), provided that the approximate solutions are controlled in reasonable norms, here, $L^\infty$ and BV norms.
Then a "Lax-consistency" property holds, of the form: a limit $(\bar \rho, \bar \bfu, \bar e)$ of a convergent subsequence of  approximate solutions  given by  the considered numerical scheme and bounded in the $L^\infty$ and BV norms, satisfies the following weak entropy inequality:
\begin{multline}
-\int_0^T \int_\Omega  \eta(\bar \rho, \bar e)\, \partial_t \varphi + \eta(\bar \rho,\bar e)\, \bfu \cdot \gradi \varphi \dx \dt
- \int_\Omega  \eta(\bar \rho,\bar e)(\bfx,0)\ \varphi(\bfx,0) \dx \leq 0,
\\
\mbox{for any function }\varphi \in \xC^\infty_c \bigl([0,T)\times \bar\Omega\bigr), \varphi \geq 0.
\label{eq:weak-entropy}
\end{multline}
\end{list} 

\medskip
In the sequel we address implicit schemes (Section \ref{sec:implicit}) and segregated explicit schemes (Section \ref{sec:explicit}).
For implicit schemes, we first consider an upwind discretization for which we get a local discrete entropy inequality (Theorem \ref{thrm:impl_upw}), and then a MUSCL-like improvement of the discretization of the convection term in order to reduce the numerical diffusion, for which we only get a global entropy estimate and a weak local entropy inequality (Theorem \ref{thrm:impl_muscl}). 
The case of explicit schemes is a little more tricky: we again consider the same two discretizations ({\it i.e.} upwind and MUSCL-like)  but we first deal with the mass balance equation, then with the internal energy equation, and combine the results to address entropy inequalities.

%
% --------------------------------------------------------------------------------------------------------
%
\subsection{Meshes and discrete norms}

Let  $\mesh$ be a mesh of the domain $\Omega$, supposed to be regular in the usual sense of the finite element literature (see \textit{e.g.} \cite{cia-91-bas}).
By $\edges$ and $\edges(K)$ we denote the set of all $(d-1)$-faces $\edge$ of the mesh and of the cell $K \in \mesh$ respectively, and we suppose that the number of the faces of a cell is bounded.
The set of faces included in $\Omega$ (resp. in the boundary $\partial \Omega$) is denoted by $\edgesint$ (resp. $\edgesext$); a face $\edge \in \edgesint$ separating the cells $K$ and $L$ is denoted by $\edge=K|L$.
For $K \in \mesh$ and $\edge \in \edges$, we denote by $|K|$ the measure of $K$ and by $|\edge|$ the $(d-1)$-measure of the face $\edge$.
The following quantities related to the mesh are used in the sequel:\begin{align}\label{eq:def_h_bar}
&\displaystyle h_\mesh=\max_{K\in\mesh} h_K \mbox{ with } h_K={\rm diam}(K), & & \underline h_\mesh = \min_{K \in \mesh} \frac{|K|}{\displaystyle \sum_{\edge \in \edges(K)} |\edge|}.
\\ \label{eq:def_Cm}
&\displaystyle  C_\mesh = \max_{K\in \mesh,\ (\edge,\edge')\in \edges(K)^2}\ \frac{(|\edge|+|\edge'|)\,h_K}{|K|},& &f_\mesh= \max_{K\in \mesh} \mbox{card } \edges(K).
\end{align}

Let $(t_n)_{0\leq n \leq N}$, with $0=t_0 < t_1 <\ldots < t_N=T$, define a partition of the time interval $(0,T)$, which we suppose uniform for the sake of simplicity, and let $\delta t=t_{n+1}-t_n$ for $0 \leq n \leq N-1$ be the (constant) time step.

\medskip
The discrete pressure, density and the internal energy unknowns are associated with the cells of the mesh $\mesh$; they are denoted by:
\[
\big\{ p^n_K,\ \rho^n_K,\ e^n_K,\ K \in \mesh,\ 0 \leq n \leq N \big\}.
\]

In the estimates given below, we shall need some discrete norms that we now define.

\begin{definition}[Discrete BV semi-norms and weak $L^1(0,T;(W^{1,+\infty}_0)')$ norm]
For a family $(z_K^n)_{K\in\mesh, 0 \leq n \leq N}\subset \xR$, let us define the following norms of the associated piecewise constant function $z$:
\begin{equation}
\begin{array}{l} \displaystyle
\normtbv{z}= \sum_{n=0}^N\ \sum_{K \in \mesh} |K|\ |z^{n+1}_K-z^n_K|,\label{def:time-BV}
\\ \displaystyle
\normxbv{z}= \sum_{n=0}^N \delta t \sum_{\edge=K|L \in \edgesint} |\edge|\ |z^n_L-z^n_K|,
\\ \displaystyle
\norm{z}_{-1,1,\star}  = \sup_{\displaystyle \psi \in \xC^\infty_c([0,T)\times \Omega)} \quad
\frac 1 {\norm{\gradi \psi}_\infty}\ \Bigl[\sum_{n=0}^N \delta t \sum_{K \in \mesh}|K|\ z_K^n \psi_K^n\Bigr],
\end{array}
\end{equation}
where $\psi_K^n$ stands for $\psi(\bfx_K,t_n)$, with $\bfx_K$ the mass center of $K$. 
Note that this latter weak norm is the discrete equivalent of the continuous dual norm of $v \in L^1(\Omega \times (0,T))$, defined by
\[
\norm{v}_{L^1(0,T ;(W^{1,+\infty}_0)')} =  \sup_{\displaystyle \psi \in \xC^\infty_c([0,T)\times \Omega)} \quad
\frac 1 {\norm{\gradi \psi}_\infty}\ \int_0^T \int_\Omega v\, \psi \dx \dt.
\]
\end{definition}
Some of the proofs below are based on the following convexity result \cite[Lemma 2.3]{gal-08-unc}.
In its formulation, and throughout the paper, $\li a,\ b \ri$ stands for the interval $[\min(a,b),\ \max(a,b)]$, for any real numbers $a$ and $b$.

\begin{lemma}\label{lem:int_convexity}
Let $\varphi$ be a strictly convex and continuously differentiable function over an open interval $I$ of $\xR$.
Let $x_K \in I$ and $x_L \in I$ be two real numbers.
Then the relation
\begin{multline} \label{eq:conv_int}
\varphi(x_K) + \varphi'(x_K)\,(x_{KL}-x_K) = \varphi(x_L) + \varphi'(x_L)\,(x_{KL}-x_L ) \mbox{ if } x_K \neq x_L,
\\
x_{KL} = x_K=x_L \mbox{ otherwise}	
\end{multline}
uniquely defines the real number $x_{KL}$ in $\li x_K, x_L \ri$.  
\end{lemma}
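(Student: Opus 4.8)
The plan is to read the defining relation geometrically: the two sides of \eqref{eq:conv_int} are precisely the values at $x_{KL}$ of the tangent lines to the graph of $\varphi$ at the abscissae $x_K$ and $x_L$. Writing $T_K(x) = \varphi(x_K) + \varphi'(x_K)\,(x - x_K)$ and $T_L(x) = \varphi(x_L) + \varphi'(x_L)\,(x - x_L)$, the equation is simply $T_K(x_{KL}) = T_L(x_{KL})$, that is, $x_{KL}$ is the abscissa of the intersection point of these two tangents. Since the degenerate case $x_K = x_L$ is settled by definition, I would assume $x_K \neq x_L$ throughout.

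For existence and uniqueness, I would first record that strict convexity together with the $\xC^1$ assumption forces $\varphi'$ to be strictly increasing on $I$: if $\varphi'$ took equal values at two points, it would be constant between them (being nondecreasing, as $\varphi$ is convex), whence $\varphi$ would be affine there, contradicting strict convexity. Hence $x_K \neq x_L$ gives $\varphi'(x_K) \neq \varphi'(x_L)$, so the two tangents have distinct slopes and meet at exactly one point. Equivalently, the affine function $g = T_K - T_L$ has nonzero slope $\varphi'(x_K) - \varphi'(x_L)$ and therefore a single zero, which is the unique $x_{KL}$.

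To localise $x_{KL}$ in $\li x_K, x_L \ri$, I would invoke the strict tangent inequality for strictly convex functions, namely $\varphi(y) > \varphi(x) + \varphi'(x)\,(y-x)$ whenever $y \neq x$. Evaluating $g$ at the two endpoints and using $T_K(x_K) = \varphi(x_K)$ and $T_L(x_L) = \varphi(x_L)$, this yields $g(x_K) = \varphi(x_K) - T_L(x_K) > 0$ and $g(x_L) = T_K(x_L) - \varphi(x_L) < 0$. Since $g$ is affine with a single zero, that zero $x_{KL}$ lies strictly between $x_K$ and $x_L$, hence in $\li x_K, x_L \ri$.

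There is essentially no serious obstacle here; the content is entirely the two standard consequences of strict convexity used above, namely the strict monotonicity of $\varphi'$ (for the uniqueness of the intersection) and the strict tangent-line lower bound (for the sign change that confines the root to the interval). The only point deserving a line of care is that these are genuinely the \emph{strict} versions, so that distinct slopes and strict inequalities are guaranteed as soon as $x_K \neq x_L$; both follow from the definition of strict convexity combined with differentiability.
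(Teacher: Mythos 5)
Your proof is correct. Note, however, that the paper itself contains no proof of this lemma: it is imported verbatim as Lemma 2.3 of \cite{gal-08-unc}, so there is no in-paper argument to compare against. Your tangent-line reading is the natural self-contained proof of that cited result: the equation says $T_K(x_{KL})=T_L(x_{KL})$ for the two tangents; strict convexity plus differentiability gives strict monotonicity of $\varphi'$, hence distinct slopes and a unique intersection when $x_K\neq x_L$; and the strict tangent inequality gives $T_K-T_L$ opposite (strict) signs at $x_K$ and $x_L$, so the unique zero of this affine function lies strictly between them, hence in $\li x_K, x_L\ri$. All steps are justified, including the two "strict" consequences of strict convexity that you correctly flag as the only points needing care.
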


\begin{remark}[$x_{KL}$ for $\varphi(z)=z^2$]\label{rmrk:xKL}
Let us consider the specific function $\varphi(z)=z^2$.
Then, an easy computation yields  $x_{KL}=(x_K + x_L) / 2$ \textit{i.e.} the centered approximation.
\end{remark}

%------------------------------------------------------------------------------------
%
\subsection{Implicit schemes}\label{sec:implicit}
\medskip
With the above notations, the space time discretization of System \eqref{eq:cont} reads:
\begin{subequations}\label{eq:impl}
\begin{align}
\nonumber
& \mbox{For } K \in \mesh,\ 0 \leq n \leq N-1, 
\\[1ex] 
\label{eq:mass_i} &
\ \frac{|K|}{\delta t} (\rho_K^{n+1}-\rho_K^n) + \sum_{\edge\in \edges(K)} F_{K,\edge}^{n+1} = 0,
\\[1ex] \label{eq:e_int_i} &
\ \frac{|K|}{\delta t} (\rho_K^{n+1}e_K^{n+1}-\rho_K^n e_K^n) + \sum_{\edge\in \edges(K)} F_{K,\edge}^{n+1} e_\edge^{n+1}
+ p_K^{n+1} \sum_{\edge\in \edges(K)} |\edge|\,u_{K,\edge}^{n+1} \geq 0,
\\[1ex] \label{eq:etat_i} &
\ p_K^{n+1}=(\gamma-1)\, \rho_K^{n+1}\, e_K^{n+1},
\end{align}\end{subequations}
where $F_{K,\edge}^{n+1}$ is the mass flux through the face $\edge$, $e_\edge^{n+1}$ is an approximation of the internal energy at the face $\edge$, and $u_{K,\edge}^{n+1}$ stands for an approximation of the normal velocity to the face $\edge$; note that the velocity is solved in the full scheme by a space discretization of the momentum prediction and correction equations \eqref{eq:sch_pred}-\eqref{eq:sch_corr}.
Consistently with the boundary conditions, $u_{K,\edge}^{n+1}$ vanishes on every external face.
The mass flux $F_{K,\edge}^{n+1}$ reads:
\begin{equation} \label{mass-flux}
F_{K,\edge}^{n+1}=|\edge|\ \rho_\edge^{n+1} u_{K,\edge}^{n+1},
\end{equation}
where $\rho_\edge^{n+1}$ stands for an approximation of the density on $\edge$.
Throughout the paper, we suppose that $\rho_K^n$, $e_K^n$, $\rho_\edge^n$ and $e_\edge^n$ are positive, for any $K\in\mesh$, $\edge\in\edgesint$, $0\leq n \leq N$, which is verified by the solutions of the schemes presented in \cite{her-14-ons,her-18-cons,gas-18-mus,gou-17-sta} (of course, with positive initial conditions for $\rho$ and $e$).

\medskip
The two following lemmas are straigthforward consequences of Lemmas A1 and A2 in \cite{her-14-ons} and state discrete analogues of \eqref{eq:ent_m} and \eqref{eq:ent_e} respectively which are used to obtain the entropy inequalities.

\begin{lemma}\label{lem:mass}
Let $K \in \mesh$, $n$ be such that $0\leq n \leq N-1$ and let us suppose that the discrete mass balance \eqref{eq:mass_i} holds.
Let $\varphi$ be a twice continuously differentiable function defined over $(0,+\infty)$.
Then 
\begin{multline}\label{eq:R_mass} 
\frac{|K|}{\delta t} \Bigl[\varphi(\rho_K^{n+1})-\varphi(\rho_K^n)\Bigr]
+ \sum_{\edge\in \edges(K)} |\edge|\ \varphi(\rho_\edge^{n+1})\, u_{K,\edge}^{n+1} \\
+ \Bigl[\rho_K^{n+1} \varphi'(\rho_K^{n+1}) - \varphi(\rho_K^{n+1}) \Bigr] \sum_{\edge\in \edges(K)} |\edge|\  u_{K,\edge}^{n+1}
+|K|\, (R_m)_K^{n+1}= 0,
\\
\mbox{with }|K|\,(R_m)_K^{n+1}= \frac 1 2 \frac{|K|}{\delta t}\ \varphi''(\rho_K^{n+1/2})\ (\rho_K^{n+1}-\rho_K^n)^2
\hspace{20ex} \\
+ \sum_{\edge\in \edges(K)} |\edge|
\ \Bigl[\varphi(\rho_K^{n+1}) - \varphi(\rho_\edge^{n+1}) + \varphi'(\rho_K^{n+1}) (\rho_\edge^{n+1}-\rho_K^{n+1})\Bigr] u_{K,\edge}^{n+1}, 
\end{multline}
where $\rho_K^{n+1/2}\in \li \rho_K^n ,\rho_K^{n+1} \ri.$
\end{lemma}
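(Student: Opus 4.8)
The plan is to mimic at the discrete level the continuous computation that led to \eqref{eq:ent_m}: I would multiply the discrete mass balance \eqref{eq:mass_i} by $\varphi'(\rho_K^{n+1})$ and then reorganise each term so that the discrete time-derivative becomes a discrete time-derivative of $\varphi(\rho)$, and the discrete flux becomes a discrete analogue of $\dive(\varphi(\rho)\,\bfu)$ plus the $[\rho\varphi'-\varphi]\,\dive\bfu$ contribution, all up to the remainder $(R_m)_K^{n+1}$. Since \eqref{eq:mass_i} guarantees that $\varphi'(\rho_K^{n+1})\bigl[\frac{|K|}{\delta t}(\rho_K^{n+1}-\rho_K^n)+\sum_{\edge\in\edges(K)}F_{K,\edge}^{n+1}\bigr]=0$, it is in fact cleanest to verify \eqref{eq:R_mass} in the reverse direction, showing that its left-hand side collapses exactly onto this vanishing quantity.

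First I would treat the discrete time-derivative term. A second-order Taylor expansion of $\varphi(\rho_K^n)$ about $\rho_K^{n+1}$, with Lagrange remainder, yields $\varphi'(\rho_K^{n+1})(\rho_K^{n+1}-\rho_K^n)=\varphi(\rho_K^{n+1})-\varphi(\rho_K^n)+\frac12\,\varphi''(\rho_K^{n+1/2})(\rho_K^{n+1}-\rho_K^n)^2$ for some $\rho_K^{n+1/2}\in\li\rho_K^n,\rho_K^{n+1}\ri$; this simultaneously produces the clean discrete time-derivative $\frac{|K|}{\delta t}[\varphi(\rho_K^{n+1})-\varphi(\rho_K^n)]$ and the first contribution to $(R_m)_K^{n+1}$.

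Next I would treat the flux term. Writing $F_{K,\edge}^{n+1}=|\edge|\,\rho_\edge^{n+1}u_{K,\edge}^{n+1}$ and splitting $\rho_\edge^{n+1}=\rho_K^{n+1}+(\rho_\edge^{n+1}-\rho_K^{n+1})$ gives $\varphi'(\rho_K^{n+1})\,|\edge|\,\rho_\edge^{n+1}u_{K,\edge}^{n+1}=\rho_K^{n+1}\varphi'(\rho_K^{n+1})\,|\edge|\,u_{K,\edge}^{n+1}+\varphi'(\rho_K^{n+1})(\rho_\edge^{n+1}-\rho_K^{n+1})\,|\edge|\,u_{K,\edge}^{n+1}$. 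Adding and subtracting $\varphi(\rho_\edge^{n+1})$ and $\varphi(\rho_K^{n+1})$ on each face then distributes this face contribution into exactly three pieces: the flux $|\edge|\,\varphi(\rho_\edge^{n+1})u_{K,\edge}^{n+1}$, the coefficient $[\rho_K^{n+1}\varphi'(\rho_K^{n+1})-\varphi(\rho_K^{n+1})]\,|\edge|\,u_{K,\edge}^{n+1}$ of the discrete $\dive\bfu$, and the residual $[\varphi(\rho_K^{n+1})-\varphi(\rho_\edge^{n+1})+\varphi'(\rho_K^{n+1})(\rho_\edge^{n+1}-\rho_K^{n+1})]\,|\edge|\,u_{K,\edge}^{n+1}$, which is the second contribution to $(R_m)_K^{n+1}$. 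Summing over $\edge\in\edges(K)$ and recombining with the time term reconstitutes $\varphi'(\rho_K^{n+1})$ times the mass balance, which vanishes, giving \eqref{eq:R_mass}.

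There is no genuine obstacle here: the identity is purely algebraic apart from the single Taylor expansion, and the bookkeeping is routine once one observes that the three face pieces telescope back to $\varphi'(\rho_K^{n+1})\,|\edge|\,\rho_\edge^{n+1}u_{K,\edge}^{n+1}$. The only point requiring a word of care is the existence of the intermediate value $\rho_K^{n+1/2}$, guaranteed by $\varphi\in C^2$ together with the positivity of the densities assumed throughout, which keeps every argument inside the domain $(0,+\infty)$ of $\varphi$. As the authors note, the statement is thus essentially a direct transcription of Lemmas A1 and A2 of \cite{her-14-ons}.
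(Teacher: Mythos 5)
Your proof is correct and follows exactly the route the paper intends: the paper itself only cites Lemmas A1 and A2 of \cite{her-14-ons} for this statement, and the argument there (as in the paper's own proof of Lemma \ref{lmm:tr}) is precisely your computation --- multiply \eqref{eq:mass_i} by $\varphi'(\rho_K^{n+1})$, handle the time increment by a Taylor expansion with Lagrange remainder giving $\rho_K^{n+1/2}$, and split the flux $F_{K,\edge}^{n+1}=|\edge|\,\rho_\edge^{n+1}u_{K,\edge}^{n+1}$ algebraically into the $\varphi(\rho_\edge^{n+1})$ flux, the $\bigl[\rho_K^{n+1}\varphi'(\rho_K^{n+1})-\varphi(\rho_K^{n+1})\bigr]$ divergence term, and the face remainder. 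The algebra checks out term by term, so there is nothing to add.
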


\begin{lemma}\label{lem:energy}
Let $K \in \mesh$ and $n$ be such that $0\leq n \leq N-1$.
Let $\varphi$ be a twice continuously differentiable function defined over $(0,+\infty)$.
Then:
\begin{multline}\label{eq:R_e}%\hspace{10ex}
\varphi'(e_K^{n+1})\ \Bigl[ \frac{|K|}{\delta t} (\rho_K^{n+1}e_K^{n+1}-\rho_K^n e_K^n) + \sum_{\edge\in \edges(K)} F_{K,\edge}^{n+1} e_\edge^{n+1} \Bigr]
= \\
\frac{|K|}{\delta t} \Bigl[\rho_K^{n+1} \varphi(e_K^{n+1})-\rho_K^n \varphi(e_K^n) \Bigr]
+ \sum_{\edge\in \edges(K)} F_{K,\edge}^{n+1}\, \varphi(e_\edge^{n+1}) + |K|\,(R_e)_K^{n+1},
\\
\mbox{with } \; |K|\,(R_e)_K^{n+1}=\frac 1 2 \frac{|K|}{\delta t} \rho^n_K\ \varphi''(e_K^{n+1/2})(e_K^{n+1}-e_K^n)^2 \hspace{20ex} 
\\+\sum_{\edge\in \edges(K)} F_{K,\edge}^{n+1}\ \Bigl[\varphi(e_K^{n+1}) - \varphi(e_\edge^{n+1}) + \varphi'(e_K^{n+1}) (e_\edge^{n+1}-e_K^{n+1}) \Bigr],
\end{multline}
where $e_K^{n+1/2} \in \li e_K^n, e_K^{n+1} \ri$.
\end{lemma}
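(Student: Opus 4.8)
The plan is to prove \eqref{eq:R_e} as a cell-local algebraic identity, regarding it as the fully discrete (space \emph{and} time) analogue of the semi-discrete Lemma~\ref{lmm:tr}: the left-hand side is $\varphi'(e_K^{n+1})$ times the discrete time-derivative-plus-convection operator $\mathcal{C}_K(e^{n+1})$ acting on the internal energy, and the goal is to recast it as the same operator acting on $\rho\,\varphi(e)$ plus the remainder $(R_e)_K^{n+1}$. I would split $\mathcal{C}_K(e^{n+1})$ into its discrete time-derivative part and its convective flux part, transform each separately, and then reassemble them using the discrete mass balance \eqref{eq:mass_i}, exactly in the spirit of Lemmas~A1 and A2 of \cite{her-14-ons} of which this is a transcription.

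For the time part, I would use the splitting $\rho_K^{n+1}e_K^{n+1}-\rho_K^n e_K^n = (\rho_K^{n+1}-\rho_K^n)\,e_K^{n+1}+\rho_K^n\,(e_K^{n+1}-e_K^n)$ together with a second-order Taylor expansion of $\varphi$ at $e_K^{n+1}$ evaluated at $e_K^n$, in Lagrange form, which supplies the intermediate value $e_K^{n+1/2}\in\li e_K^n,e_K^{n+1}\ri$. This reconstructs $\frac{|K|}{\delta t}\bigl[\rho_K^{n+1}\varphi(e_K^{n+1})-\rho_K^n\varphi(e_K^n)\bigr]$ together with the time contribution $\frac12\frac{|K|}{\delta t}\rho_K^n\varphi''(e_K^{n+1/2})(e_K^{n+1}-e_K^n)^2$ to $(R_e)_K^{n+1}$; after these cancellations the only surplus term of this step is $\frac{|K|}{\delta t}(\rho_K^{n+1}-\rho_K^n)\,\bigl[\varphi'(e_K^{n+1})\,e_K^{n+1}-\varphi(e_K^{n+1})\bigr]$. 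This is verbatim the computation carried out in the proof of Lemma~\ref{lmm:tr}.

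For the flux part, I would insert $e_\edge^{n+1}=e_K^{n+1}+(e_\edge^{n+1}-e_K^{n+1})$ into $\varphi'(e_K^{n+1})\sum_{\edge\in\edges(K)}F_{K,\edge}^{n+1}e_\edge^{n+1}$ and then, by adding and subtracting $\varphi(e_\edge^{n+1})-\varphi(e_K^{n+1})$, rewrite $\varphi'(e_K^{n+1})(e_\edge^{n+1}-e_K^{n+1})$ as $\bigl[\varphi(e_\edge^{n+1})-\varphi(e_K^{n+1})\bigr]$ minus the first-order Taylor remainder $\bigl[\varphi(e_\edge^{n+1})-\varphi(e_K^{n+1})-\varphi'(e_K^{n+1})(e_\edge^{n+1}-e_K^{n+1})\bigr]$. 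Summing over $\edge\in\edges(K)$, the first bracket rebuilds $\sum_{\edge\in\edges(K)}F_{K,\edge}^{n+1}\varphi(e_\edge^{n+1})$, the second (convexity-defect) bracket rebuilds the flux contribution to $(R_e)_K^{n+1}$, and the surplus term of this step is $\bigl[\varphi'(e_K^{n+1})\,e_K^{n+1}-\varphi(e_K^{n+1})\bigr]\sum_{\edge\in\edges(K)}F_{K,\edge}^{n+1}$.

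The crux, and the step I expect to require the most care, is the reassembly. Adding the two surplus terms factorizes them as $\bigl[\varphi'(e_K^{n+1})\,e_K^{n+1}-\varphi(e_K^{n+1})\bigr]\,\bigl[\frac{|K|}{\delta t}(\rho_K^{n+1}-\rho_K^n)+\sum_{\edge\in\edges(K)}F_{K,\edge}^{n+1}\bigr]$, and the second factor is precisely the left-hand side of the discrete mass balance \eqref{eq:mass_i}, hence vanishes; the surpluses thus cancel identically and only the announced right-hand side of \eqref{eq:R_e} survives. Everything else is routine bookkeeping: the only points demanding attention are keeping the Lagrange intermediate value $e_K^{n+1/2}$ inside $\li e_K^n,e_K^{n+1}\ri$, and organizing the terms so that this single mass-balance factorization is exposed rather than scattered across the computation. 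Note that neither the equation of state \eqref{eq:etat_i} nor any sign information on the fluxes is used, so the identity holds for an arbitrary twice continuously differentiable $\varphi$, as stated.
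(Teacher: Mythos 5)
Your proof is correct and follows essentially the route the paper intends: it is the fully discrete transcription of the proof of Lemma \ref{lmm:tr} (and of Lemmas A1--A2 of \cite{her-14-ons}, to which the paper defers without giving details) --- split the operator into time and flux parts, Taylor-expand around $e_K^{n+1}$, and cancel the surplus against the mass balance. Your identification of the discrete mass balance \eqref{eq:mass_i} as the crux is exactly right, and is worth making explicit since, in contrast with Lemma \ref{lem:mass}, the statement of Lemma \ref{lem:energy} leaves that hypothesis implicit: without \eqref{eq:mass_i} the identity fails, the two surplus terms combining to $\bigl[\varphi'(e_K^{n+1})\,e_K^{n+1}-\varphi(e_K^{n+1})\bigr]$ times the mass-balance residual of the cell $K$.
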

%
% ----------------------------------------------------
%
\subsubsection{Upwind implicit schemes --}
In this section, we suppose that the convection fluxes are approximated with a first order upwind scheme, {\it i.e.}, for $\edge \in \edgesint$, $\edge=K|L$, $\rho_\edge^{n+1}=\rho_K^{n+1}$ and $e_\edge^{n+1}=e_K^{n+1}$ if $u_{K,\edge} \geq 0$, $\rho_\edge^{n+1}=\rho_L^{n+1}$ and $e_\edge^{n+1}=e_L^{n+1}$ otherwise.
In this case, the scheme \eqref{eq:impl} satisfies a local entropy estimate ({\it i.e.} a discrete analogue of Inequality \eqref{eq:entropy}) which is stated in Theorem \ref{thrm:impl_upw} below.
Of course, this local entropy inequality also yields the global discrete inequality analogue to \eqref{eq:global-entropy}; furthermore, passing to the limit on the upwind implicit (or pressure correction) scheme applied the Euler equations, this local estimate also yields the Lax consistency, {\it i.e.} any limit $(\bar \rho, \bar \bfu, \bar e)$ of a convergent subsequence of  approximate solutions satisfies the  weak entropy inequality \eqref{eq:weak-entropy}.

\begin{theorem}[Discrete entropy inequality, implicit upwind scheme]\label{thrm:impl_upw}
Let $\eta$ be defined by \eqref{eq:def_eta}, and, for $0\leq n\leq N-1$, let $\eta_K^m=\eta(\rho_K^m,e_K^m)$ for $m=n,\ n+1$ and $K \in \mesh$, and $\eta_\edge^{n+1}=\eta(\rho_\edge^{n+1},e_\edge^{n+1})$ for $\edge \in \edgesint$.
Then any solution of the scheme \eqref{eq:impl} satisfies, for any $K\in\mesh$ and $0 \leq n \leq N-1$:
\[
\frac{|K|}{\delta t} (\eta_K^{n+1}-\eta_K^n)
+ \sum_{\edge\in \edges(K)} |\edge|\ \eta_\edge^{n+1} u_{K,\edge}^{n+1} \leq 0.
\]
\end{theorem}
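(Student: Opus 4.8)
The plan is to reproduce, at the fully discrete level, the continuous computation that leads from the subsystem \eqref{eq:cont} to the entropy balance \eqref{eq:entropy}, now using Lemmas \ref{lem:mass} and \ref{lem:energy} as the discrete counterparts of \eqref{eq:ent_m} and \eqref{eq:ent_e}. First I would apply Lemma \ref{lem:mass} with $\varphi=\varphi_\rho$; this produces an \emph{exact} (equality) discrete analogue of \eqref{eq:ent_m}, carrying a remainder $(R_m)_K^{n+1}$. Next, since $\varphi_e'(z)<0$ for $z>0$ by \eqref{eq:vphis} and $e_K^{n+1}>0$, I would multiply the internal energy inequality \eqref{eq:e_int_i} by $\varphi_e'(e_K^{n+1})$, which \emph{reverses} the inequality, and rewrite the resulting left-hand side by means of Lemma \ref{lem:energy} with $\varphi=\varphi_e$; this yields a discrete analogue of \eqref{eq:ent_e}, an inequality of sign $\le 0$, carrying a remainder $(R_e)_K^{n+1}$.

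Summing the mass relation (an equality) and the energy relation (an inequality $\le0$) produces an inequality $\le0$. By the definition \eqref{eq:def_eta} of $\eta$, the two discrete time-derivative contributions combine into $\frac{|K|}{\delta t}(\eta_K^{n+1}-\eta_K^n)$. Using the flux expression \eqref{mass-flux} to write $\sum_{\edge} F_{K,\edge}^{n+1}\varphi_e(e_\edge^{n+1})=\sum_{\edge}|\edge|\rho_\edge^{n+1}\varphi_e(e_\edge^{n+1})u_{K,\edge}^{n+1}$ and recognising $\eta_\edge^{n+1}=\varphi_\rho(\rho_\edge^{n+1})+\rho_\edge^{n+1}\varphi_e(e_\edge^{n+1})$, the two convection contributions combine into $\sum_{\edge}|\edge|\,\eta_\edge^{n+1}u_{K,\edge}^{n+1}$. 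The remaining two ``pressure-work'' terms are both proportional to $\sum_{\edge}|\edge|u_{K,\edge}^{n+1}$ and carry the common factor $\rho_K^{n+1}\varphi_\rho'(\rho_K^{n+1})-\varphi_\rho(\rho_K^{n+1})+\varphi_e'(e_K^{n+1})p_K^{n+1}$, which vanishes: from \eqref{eq:vphis} one gets $\rho\varphi_\rho'(\rho)-\varphi_\rho(\rho)=\rho$, while the equation of state \eqref{eq:etat_i} gives $\varphi_e'(e_K^{n+1})\,p_K^{n+1}=-\rho_K^{n+1}$. This is precisely the discrete form of the compatibility relation \eqref{prop-entropie}. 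One is thus left with
\[
\frac{|K|}{\delta t}(\eta_K^{n+1}-\eta_K^n)+\sum_{\edge\in\edges(K)}|\edge|\,\eta_\edge^{n+1}u_{K,\edge}^{n+1}+|K|\,(R_m)_K^{n+1}+|K|\,(R_e)_K^{n+1}\le0.
\]

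It then remains to show that $(R_m)_K^{n+1}\ge0$ and $(R_e)_K^{n+1}\ge0$, which is the crux of the argument. The time-difference parts of both remainders are non-negative because $\varphi_\rho''(z)=1/z>0$ and $\varphi_e''(z)=1/[(\gamma-1)z^2]>0$ on $(0,+\infty)$, and $\rho_K^n>0$. For the face sums I would invoke the upwind choice: when $u_{K,\edge}^{n+1}\ge0$ one has $\rho_\edge^{n+1}=\rho_K^{n+1}$ and $e_\edge^{n+1}=e_K^{n+1}$, so each bracketed increment vanishes identically; when $u_{K,\edge}^{n+1}<0$ (whence $F_{K,\edge}^{n+1}<0$, since $\rho_\edge^{n+1}>0$ by \eqref{mass-flux}) each bracket has the form $\varphi(a)-\varphi(b)+\varphi'(a)(b-a)$ with $\varphi$ convex, hence is $\le0$ by the tangent-line inequality, and multiplying by the negative factor $u_{K,\edge}^{n+1}$ (resp. $F_{K,\edge}^{n+1}$) renders the contribution $\ge0$. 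Both remainders are therefore non-negative, and dropping them yields the announced inequality. I expect the only genuinely delicate point to be the sign bookkeeping in this last step — pairing the sign of each increment (governed by convexity) with the sign of the material velocity, or equivalently of the mass flux, so that every face contributes with the correct sign; the rest is a faithful transcription of the continuous entropy computation.
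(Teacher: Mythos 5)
Your proposal is correct and follows essentially the same route as the paper's proof: apply Lemma \ref{lem:mass} with $\varphi_\rho$ and Lemma \ref{lem:energy} with $\varphi_e$ (the latter after multiplying \eqref{eq:e_int_i} by the negative quantity $\varphi_e'(e_K^{n+1})$, which reverses the inequality), show the remainders are non-negative via the upwind choice combined with convexity, and sum using \eqref{prop-entropie} at $(\rho_K^{n+1},e_K^{n+1})$. The only cosmetic difference is that the paper expresses each face bracket as a second-order Taylor--Lagrange remainder $-\frac 1 2 \varphi''(\rho_{\edge,K}^{n+1})(\rho_\edge^{n+1}-\rho_K^{n+1})^2\,u_{K,\edge}^{n+1}$ while you invoke the tangent-line inequality directly; these are equivalent.
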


\begin{proof}
Let $\varphi$ be a twice continuously differentiable function.
By Lemma \ref{lem:mass}, we get that \eqref{eq:R_mass} holds. 
For $\edge \in \edgesext$, thanks to the boundary conditions, the convection fluxes vanish.
For $0 \leq n \leq N-1$ and $K\in \mesh$, consider the term $(T_m)_{K,\edge}^{n+1}$ associated to an internal face $\edge=K|L$ in the remainder term $(R_m)_K^{n+1}$:
\begin{align*}
(T_m)_{K,\edge}^{n+1}& =\Bigl[\varphi(\rho_K^{n+1}) - \varphi(\rho_\edge^{n+1}) + \varphi'(\rho_K^{n+1}) (\rho_\edge^{n+1}-\rho_K^{n+1})\Bigr] u_{K,\edge}^{n+1}\\
& =
-\frac 1 2 \varphi''(\rho_{\edge,K}^{n+1})\ (\rho_\edge^{n+1}-\rho_K^{n+1})^2 u_{K,\edge}^{n+1},
\end{align*}
where $\rho_{\edge,K}^{n+1} \in \li \rho_\edge^{n+1}, \rho_K^{n+1} \ri$.
With the upwind choice, if $u_{K,\edge}^{n+1} \geq 0$, $\rho_\edge^{n+1}=\rho_K^{n+1}$ and $(T_m)_{K,\edge}^{n+1}$ vanishes.
If $u_{K,\edge}^{n+1} < 0$ and $\varphi''$ is a non-negative function ({\it i.e.} $\varphi$ is convex), $(T_m)_{K,\edge}^{n+1}$ is non-negative and so is $(R_m)_K^{n+1}$, for any $K \in \mesh$.
Since $\varphi_\rho$ defined by \eqref{eq:vphis} is indeed convex, Lemma \ref{lem:mass} implies that any solution $\{\rho_K^n, K\in \mesh, 0 \leq n \leq N \}$ to Equation \eqref{eq:mass_i} of the scheme satisfies, for $K \in \mesh$ and $0 \leq n \leq N-1$:
\begin{multline}\label{eq:ent_mi}
\frac{|K|}{\delta t} \Bigl[\varphi_\rho(\rho_K^{n+1})-\varphi_\rho(\rho_K^n)\Bigr]
+ \sum_{\edge\in \edges(K)} |\edge|\ \varphi_\rho(\rho_\edge^{n+1}) u_{K,\edge}^{n+1}
\\
+ \Bigl[\rho_K^{n+1} \varphi_\rho'(\rho_K^{n+1}) - \varphi_\rho(\rho_K^{n+1}) \Bigr] \sum_{\edge\in \edges(K)} |\edge|\  u_{K,\edge}^{n+1}
\leq 0.
\end{multline}
Now turning to Lemma \ref{lem:energy}, by similar arguments,  the remainder term $(R_e)_K^{n+1}$ in \eqref{eq:R_e} is nonnegative for any regular convex function $\varphi$, for any $K\in\mesh$ and $0 \leq n \leq N-1$.
Hence, since $\varphi_e$ defined by Equation \eqref{eq:vphis} is convex, we get that any solution to \eqref{eq:e_int_i} satisfies:
\begin{multline}\label{eq:ent_ei}
\frac{|K|}{\delta t} \Bigl[\rho_K^{n+1} \varphi_e(e_K^{n+1})-\rho_K^n \varphi_e(e_K^n) \Bigr]
\\+ \sum_{\edge\in \edges(K)} F_{K,\edge}^{n+1}\, \varphi_e(e_\edge^{n+1})
+ \varphi'_e(e_K^{n+1})\,p_K^{n+1} \sum_{\edge\in \edges(K)} |\edge|\  u_{K,\edge}^{n+1}
\leq 0.
\end{multline}
The desired relation is then obtained by summing the inequalities \eqref{eq:ent_mi} and \eqref{eq:ent_ei}, using \eqref{prop-entropie}.
\end{proof}
%
% ----------------------------------------------------
%
\subsubsection{MUSCL-like schemes --}
The aim of this section is to improve the approximation of the convection fluxes in \eqref{eq:mass_i} and \eqref{eq:e_int_i} in order to reduce the numerical diffusion, while still satisfying an entropy inequality.
This leads to a condition similar to the limitation procedure which is the core of a MUSCL procedure \cite{van-79-tow}; indeed, in order to yield an entropy inequality (instead of, for a MUSCL technique, to yield a maximum principle), the approximation of the unknowns at the face must be "sufficiently close to" the upwind approximation.
The entropy inequality is then obtained only in the weak sense.
The technique to reach this result consists in splitting the remainder terms appearing in Lemma \ref{lem:mass} and \ref{lem:energy} in two parts: the first one is non-negative under some condition for the face approximation (hence the above mentioned limitation requirement); the second one is conservative and can be bounded in a discrete negative Sobolev norm (this explains why the entropy estimate is only a weak one).

\medskip
Let $\varphi_\rho$ and $\varphi_e$ be the functions defined by \eqref{eq:vphis} and let $\edge\in\edgesint$, $\edge=K|L$; by Lemma \ref{lem:int_convexity}, there exists a unique $\rho_{KL}^{n+1} \in \li \rho_K^{n+1}, \rho_L^{n+1} \ri$ and $e_{KL}^{n+1}\in \li e_K^{n+1}, e_L^{n+1} \ri$ such that
\begin{subequations}
\begin{align}
&\varphi_\rho(\rho_K^{n+1})\!+\!\varphi'_\rho(\rho_K^{n+1}) \bigr[\rho_{KL}^{n+1}\!-\!\rho_K^{n+1}\bigl]
= \varphi_\rho(\rho_L^{n+1})\!+\!\varphi'_\rho(\rho_L^{n+1})\bigr[\rho_{KL}^{n+1}\!-\!\rho_L^{n+1}\bigl], \label{rhoKL}
\\
&\varphi_e(e_K^{n+1})\!+\!\varphi'_e(e_K^{n+1})\ \bigr[e_{KL}^{n+1}\!-\!e_K^{n+1}\bigl]
= \varphi_e(e_L^{n+1})\!+\!\varphi'_e(e_L^{n+1})\ \bigr[e_{KL}^{n+1}\!-\!e_L^{n+1}\bigl].\label{eKL}
\end{align}
\label{def:rhoeKL}
\end{subequations}
Entropy estimates are obtained in Theorem \ref{thrm:impl_muscl} under the following conditions:
\begin{subequations}\label{eq:H-imp}  
\begin{align} \label{eq:H-rho-imp} &
\rho_\edge^{n+1} \in \li \rho_K^{n+1}, \ \rho_{KL}^{n+1} \ri \mbox{ if } u_{K,\edge}^{n+1} \geq 0, \quad
\rho_\edge^{n+1} \in \li \rho_L^{n+1},\ \rho_{KL}^{n+1} \ri \mbox{ otherwise},  
\\ \label{eq:H-e-imp} &
e_\edge^{n+1}\in \li e_K^{n+1},\ e_{KL}^{n+1}\ri \mbox{ if } u_{K,\edge}^{n+1} \geq 0, \quad
e_\edge^{n+1}\in \li e_L^{n+1},\ e_{KL}^{n+1}\ri \mbox{ otherwise},
\end{align}
\end{subequations}
where $\rho_{KL}^{n+1}$ and $e_{KL}^{n+1}$ are defined by \eqref{def:rhoeKL}.
Note that these conditions are satisfied by the upwind scheme \eqref{eq:impl}.
They may be seen as an additional constraint to be added to the limitation of a MUSCL-like procedure (see also the conclusion of the last section of this paper).

\begin{theorem}[Entropy inequalities, implicit MUSCL-like scheme]\label{thrm:impl_muscl}
Let us assume that, for $\edge \in \edgesint$, $\edge=K|L$ and for $0 \leq n \leq N-1$, the approximate density $\rho_\edge^{n+1}$ and internal energy $e_\edge^{n+1}$ in the numerical mass fluxes \eqref{mass-flux} and in the internal energy balance \eqref{eq:e_int_i} satisfy the conditions \eqref{eq:H-imp}.

Then any solution of the scheme \eqref{eq:impl} satisfies, for any $K\in\mesh$ and $0 \leq n \leq N-1$:
\[
\frac{|K|}{\delta t} (\eta_K^{n+1}-\eta_K^n)
+ \sum_{\edge\in \edges(K)} |\edge|\ \eta_\edge^{n+1} u_{K,\edge}^{n+1} + |K|\ (\delta\!R_\eta)_K^{n+1} \leq 0,
\]
where the remainder term $\delta\!R_\eta$ satisfies $\sum_{K\in\mesh}|K|\ (\delta\!R_\eta)_K^{n+1} =0$ so that, integrating in space ({\it i.e.} summing over the cells), the following global discrete entropy estimate holds for $0 \leq n \leq N-1$:
\[
\sum_{K\in\mesh} |K|\ \eta_K^{n+1} \leq \sum_{K\in\mesh} |K|\ \eta_K^n.
\]
In addition, let us suppose that there exists $M >0$ such that $\rho_K^n \leq M$, $1/ \rho_K^n \leq M$, $e_K^n \leq M$, $1/e_K^n \leq M$ and $|u_{K,\edge}^n| \leq M$ for $K \in \mesh$, $\edge \in \edges(K)$ and $0 \leq n \leq N$, and let us define the quantities $|\varphi_\rho'|_\infty=\max(|\varphi'_\rho(1/M)|,\ |\varphi'_\rho(M)|)$ and $|\varphi_e'|_\infty=\max(|\varphi'_e(1/M)|,\ |\varphi'_e(M)|)$.
Then the remainder term $\delta\!R_m$ satisfies the following bound:
\begin{equation}
\norm{\delta \!R_m}_{-1,1,\star}  \leq 3\ M\ \bigl(|\varphi_\rho'|_\infty\ \normxbv{\rho}+M\ |\varphi_e'|_\infty\ \normxbv{e})\ h_\mesh.
\end{equation}
Therefore, a Lax-consistency property holds; more precisely, any limit $(\bar \rho, \bar \bfu, \bar e)$ of a converging sequence of approximate solutions bounded in the $L^\infty$ and BV norms satisfies \eqref{eq:weak-entropy}.
\end{theorem}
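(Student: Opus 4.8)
The plan is to push the proof of Theorem~\ref{thrm:impl_upw} as far as it goes and then to absorb the loss of pointwise non-negativity of the remainders into a conservative term. First I would apply Lemma~\ref{lem:mass} with $\varphi=\varphi_\rho$ and Lemma~\ref{lem:energy} with $\varphi=\varphi_e$, both convex by \eqref{eq:vphis}. Combining the resulting identities \eqref{eq:R_mass} and \eqref{eq:R_e} (the latter fed by the inequality \eqref{eq:e_int_i} and $\varphi_e'<0$) and using the pointwise relation \eqref{prop-entropie} to cancel the coefficients of $\sum_{\edge\in\edges(K)}|\edge|\,u_{K,\edge}^{n+1}$, the left-hand side reassembles exactly into the discrete entropy time-derivative $\frac{|K|}{\delta t}(\eta_K^{n+1}-\eta_K^n)$ and the conservative flux $\sum_{\edge\in\edges(K)}|\edge|\,\eta_\edge^{n+1}u_{K,\edge}^{n+1}$, since $\eta=\varphi_\rho(\rho)+\rho\,\varphi_e(e)$ by \eqref{eq:def_eta} and $F_{K,\edge}^{n+1}=|\edge|\rho_\edge^{n+1}u_{K,\edge}^{n+1}$. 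What is left are the two remainders $(R_m)_K^{n+1}$ and $(R_e)_K^{n+1}$, which, unlike in the upwind case, are no longer non-negative cell by cell.

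The core of the proof is then to split each remainder. I write the face part of $(R_m)_K^{n+1}$ as $-|\edge|\,\Delta_{\varphi_\rho}(\rho_\edge^{n+1},\rho_K^{n+1})\,u_{K,\edge}^{n+1}$, where $\Delta_\varphi(a,b)=\varphi(a)-\varphi(b)-\varphi'(b)(a-b)\ge 0$ is the Bregman gap of a convex $\varphi$. The decisive observation is that the defining relation \eqref{rhoKL} of $\rho_{KL}^{n+1}$ forces $\Delta_{\varphi_\rho}(\rho_{KL}^{n+1},\rho_K^{n+1})=\Delta_{\varphi_\rho}(\rho_{KL}^{n+1},\rho_L^{n+1})$; hence the face flux $-|\edge|\,\Delta_{\varphi_\rho}(\rho_{KL}^{n+1},\rho_K^{n+1})\,u_{K,\edge}^{n+1}$ is antisymmetric under $K\leftrightarrow L$ (equal gaps, shared $|\edge|$ and $\rho_{KL}^{n+1}$, opposite normal velocities), and its cell sum telescopes to zero. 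I declare this the conservative part $(\delta\!R_m)_K^{n+1}$ and the leftover $|\edge|\bigl[\Delta_{\varphi_\rho}(\rho_{KL}^{n+1},\rho_K^{n+1})-\Delta_{\varphi_\rho}(\rho_\edge^{n+1},\rho_K^{n+1})\bigr]u_{K,\edge}^{n+1}$ the dissipative part. Since $a\mapsto\Delta_\varphi(a,b)$ is non-decreasing in the distance $|a-b|$, condition \eqref{eq:H-rho-imp} makes this leftover non-negative: when $u_{K,\edge}^{n+1}\ge 0$ the face value lies in $\li\rho_K^{n+1},\rho_{KL}^{n+1}\ri$, so $\Delta_{\varphi_\rho}(\rho_\edge^{n+1},\rho_K^{n+1})\le\Delta_{\varphi_\rho}(\rho_{KL}^{n+1},\rho_K^{n+1})$, and when $u_{K,\edge}^{n+1}<0$ it lies beyond $\rho_{KL}^{n+1}$ and the inequality flips, the product staying non-negative in both cases. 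The same decomposition carried out with $\varphi_e$, the weighted flux $F_{K,\edge}^{n+1}$, condition \eqref{eq:H-e-imp} and the non-negative time term $\tfrac12\tfrac{|K|}{\delta t}\rho_K^n\varphi_e''(e_K^{n+1/2})(e_K^{n+1}-e_K^n)^2$ yields a conservative $(\delta\!R_e)_K^{n+1}$ and a non-negative dissipative part. Setting $(\delta\!R_\eta)_K^{n+1}=(\delta\!R_m)_K^{n+1}+(\delta\!R_e)_K^{n+1}$, discarding the non-negative dissipative parts produces the stated local inequality; summing over $K$ kills both the conservative flux and $(\delta\!R_\eta)$, giving the global estimate.

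It remains to bound the conservative remainder in the weak norm, which I would do by discrete summation by parts. Testing against $\psi$ and using antisymmetry turns $\sum_K|K|\,(\delta\!R_m)_K^{n}\,\psi_K^{n}$ into a sum over internal faces of $|\edge|\,\Delta_{\varphi_\rho}(\rho_{KL}^{n},\rho_K^{n})\,u_{K,\edge}^{n}\,(\psi_K^{n}-\psi_L^{n})$. I then bound $|\psi_K^{n}-\psi_L^{n}|$ by $\norm{\gradi\psi}_\infty$ times the distance of the cell centres, which is of order $h_\mesh$, $|u_{K,\edge}^{n}|\le M$, and $\Delta_{\varphi_\rho}(\rho_{KL}^{n},\rho_K^{n})$ by a constant times $|\varphi_\rho'|_\infty\,|\rho_L^{n}-\rho_K^{n}|$ (mean value on $\li\rho_K^{n},\rho_L^{n}\ri$, in which $\rho_{KL}^{n}$ sits); the face differences then reconstruct $\normxbv{\rho}$ with the factor $h_\mesh$ in front. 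The internal-energy contribution is identical except that $F_{K,\edge}^{n}$ carries the extra weight $\rho_\edge^{n}\le M$, which is exactly the reason for the additional factor $M$ multiplying $|\varphi_e'|_\infty\,\normxbv{e}$. Collecting the two pieces gives the announced bound on $\norm{\delta\!R_m}_{-1,1,\star}$.

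Finally, the $L^\infty$ and BV control keep $\normxbv{\rho}$ and $\normxbv{e}$ bounded, so the bound is $O(h_\mesh)$ and $\delta\!R_m$ tends to zero in the $L^1(0,T;(W^{1,+\infty}_0)')$ sense; a standard Lax--Wendroff passage to the limit in the weak form of the local inequality, exactly as for the upwind scheme, then yields \eqref{eq:weak-entropy}. The main obstacle is the splitting of the second paragraph: arranging it so that the $\rho_{KL}^{n+1}$- and $e_{KL}^{n+1}$-centred fluxes are \emph{exactly} conservative---this is where the defining property \eqref{def:rhoeKL} of $\rho_{KL}^{n+1}$ and $e_{KL}^{n+1}$ is indispensable---while the leftovers are provably non-negative under \eqref{eq:H-imp}. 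The internal-energy balance is the delicate case, because of the density weight in the flux and the quadratic time term, both of which must be tracked through the decomposition.
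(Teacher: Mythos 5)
Your proof is correct, and it follows the paper's overall strategy: apply Lemmas \ref{lem:mass} and \ref{lem:energy} with $\varphi_\rho$ and $\varphi_e$, split each face remainder into a conservative flux pivoting on the Lemma \ref{lem:int_convexity} value $x_{KL}$ plus a leftover that is non-negative under \eqref{eq:H-imp}, bound the conservative part in the $\norm{\cdot}_{-1,1,\star}$ norm by summation by parts against $\normxbv{\rho}$ and $\normxbv{e}$, and conclude via \eqref{prop-entropie}. The decomposition itself, however, is genuinely different from the paper's. The paper takes as conservative part $|\edge|\,(\delta\varphi_\rho)^{n+1}_\edge\,u^{n+1}_{K,\edge}$ with $(\delta\varphi_\rho)^{n+1}_\edge$ given by \eqref{eq:def_drho}, a quantity that still involves the scheme's face value $\rho_\edge^{n+1}$ and whose $K\leftrightarrow L$ symmetry must be verified against \eqref{rhoKL}; the corresponding leftover is the product $\frac 1 2\,[\varphi'_\rho(\rho_K^{n+1})-\varphi'_\rho(\rho_L^{n+1})]\,(\rho_\edge^{n+1}-\rho_{KL}^{n+1})\,u_{K,\edge}^{n+1}$, whose sign follows in one line from the monotonicity of $\varphi'_\rho$. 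You instead take as conservative part the Bregman gap anchored at $\rho_{KL}^{n+1}$, namely $-|\edge|\,\bigl[\varphi_\rho(\rho_{KL}^{n+1})-\varphi_\rho(\rho_K^{n+1})-\varphi'_\rho(\rho_K^{n+1})(\rho_{KL}^{n+1}-\rho_K^{n+1})\bigr]\,u_{K,\edge}^{n+1}$, whose conservativity is immediate because \eqref{rhoKL} says exactly that the two gaps seen from $K$ and from $L$ are equal; your leftover is a difference of two gaps anchored at $\rho_K^{n+1}$, non-negative under \eqref{eq:H-rho-imp} by monotonicity of the gap in its first argument (and likewise for the energy, with the flux $F_{K,\edge}^{n+1}$ carrying the extra factor $M$). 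The trade-off: your conservative flux does not depend on the actual face value $\rho_\edge^{n+1}$, so one and the same remainder works for every admissible MUSCL choice, and your gap bound $2\,|\varphi'_\rho|_\infty\,|\rho_K^{n+1}-\rho_L^{n+1}|$ even slightly sharpens the paper's constant $3$; the paper's split, by contrast, yields the shortest possible sign argument. One shared gloss: both you and the paper bound $|\psi_K^{n+1}-\psi_L^{n+1}|$ by $\norm{\gradi\psi}_\infty\, h_\mesh$, whereas the distance between the cell centers of two neighbouring cells is only bounded by $2h_\mesh$; this affects nothing but the harmless multiplicative constant.
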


\begin{proof}  	
Let $(\delta \varphi_\rho)^{n+1}_\edge$ be defined by:
\begin{multline} \label{eq:def_drho}
(\delta \varphi_\rho)^{n+1}_\edge=\varphi_\rho(\rho_K^{n+1})-\varphi_\rho(\rho_\edge^{n+1})
+ \varphi'_\rho(\rho_K^{n+1})\ \bigr[\rho_{KL}^{n+1}-\rho_K^{n+1}\bigl]
\\
+ \frac 1 2\,\bigl[\varphi'_\rho(\rho_K^{n+1})+\varphi'_\rho(\rho_L^{n+1})\bigr]\ \bigl[\rho_\edge^{n+1}-\rho_{KL}^{n+1}\bigr].
\end{multline}
By Lemma \ref{lem:mass}, \eqref{eq:R_mass} holds; an easy computation shows that the term associated to the face $\edge$ in the expression of the remainder term $(R_m)_K^{n+1}$ satisfies:
\begin{align*}
(F_m)_{K,\edge}^{n+1} &=
|\edge|\,\Bigl[\varphi_\rho(\rho_K^{n+1}) - \varphi_\rho(\rho_\edge^{n+1}) + \varphi_\rho'(\rho_K^{n+1}) (\rho_\edge^{n+1}-\rho_K^{n+1})\Bigr] u_{K,\edge}^{n+1}
\\
& = |\edge|\,(\delta \varphi_\rho)^{n+1}_\edge\ u_{K,\edge}^{n+1} + (F^R_m)_{K,\edge}^{n+1}
\end{align*}
with $\displaystyle
(F^R_m)_{K,\edge}^{n+1} = |\edge|\,\frac 1 2\,\Bigl[\varphi'_\rho(\rho_K^{n+1})-\varphi'_\rho(\rho_L^{n+1})\Bigr]
\ (\rho_\edge^{n+1}-\rho_{KL}^{n+1})\ u_{K,\edge}^{n+1}.$
Thanks to the assumption \eqref{eq:H-rho-imp}, since $\varphi'_\rho$ is an increasing function, $(F^R_m)_{K,\edge}^{n+1}\geq 0$.
Let us define $(\delta\!R_m)_K^{n+1}$, $K \in \mesh$, $0 \leq n \leq N-1$ by:
\begin{equation}
|K|\,(\delta\!R_m)_K^{n+1} = \sum_{\edge\in\edges(K)} |\edge|\ (\delta \varphi_\rho)^{n+1}_\edge\ u_{K,\edge}^{n+1}.
\label{deltaRm}	
\end{equation}
Then, under assumption \eqref{eq:H-rho-imp}, we get:
\begin{multline}\label{eq:ent_mi_c}
\frac{|K|}{\delta t} \bigl[\varphi_\rho(\rho_K^{n+1})-\varphi_\rho(\rho_K^n)\bigr]
+ \sum_{\edge\in \edges(K)} |\edge|\ \varphi_\rho(\rho_\edge^{n+1}) u_{K,\edge}^{n+1}
\\
+ \Bigl[\rho_K^{n+1} \varphi_\rho'(\rho_K^{n+1}) - \varphi_\rho(\rho_K^{n+1}) \Bigr] \sum_{\edge\in \edges(K)} |\edge|\  u_{K,\edge}^{n+1}
+ |K|\ (\delta\!R_m)_K^{n+1}
\leq 0.
\end{multline}
Let us prove that $\delta\! R_m$  satisfies:
\begin{equation} \label{eq:deltaRm}
\norm{\delta\!R_m}_{-1,1,\star} \leq 3\, M\ |\varphi_\rho'|_\infty\ \normxbv{\rho}\ h_\mesh.
\end{equation}
Indeed, since both $\rho_\edge^{n+1}$ and $\rho_{KL}^{n+1}$ lie in the interval $\li \rho_K^{n+1},\ \rho_L^{n+1}\ri$, we have by convexity of $\varphi_\rho$:
\[
|(\delta \varphi_\rho)_\edge^{n+1}| \leq 3\, \max\bigl(|\varphi'_\rho(\rho_K^{n+1})|,\ |\varphi'_\rho(\rho_L^{n+1})|\bigr)\ |\rho_K^{n+1}-\rho_L^{n+1}|.
\]
Let $\psi$ be a function of $\xC^\infty_c(\Omega\times(0,T))$.
We have, thanks to the conservativity of the remainder term:
\begin{align*}
T&=
\sum_{n=0}^{N-1} \delta t \sum_{K \in \mesh}|K|\ (\delta\!R_m)_K^{n+1} \psi_K^{n+1} \\
&=
\sum_{n=0}^{N-1} \delta t \sum_{\edge=K|L \in \edgesint} |\edge|\ (\delta \varphi_\rho)_\edge^{n+1}\ (\psi_K^{n+1}-\psi_L^{n+1})\ u_{K,\edge}.
\end{align*}
Therefore,
\[
|T| \leq 3\, |\varphi'_\rho|_\infty\, M\ \bigl[  \norm{\gradi \psi}_\infty\bigr]\ h_\mesh
\sum_{n=0}^{N-1} \delta t \sum_{\edge=K|L \in \edgesint} |\edge|\ |\rho_K^{n+1}-\rho_L^{n+1}|,
\]
which concludes the proof of \eqref{eq:deltaRm}.
%\end{proof}

\medskip
Following the same line of thought for the internal energy balance, let  $(\delta \varphi_e)_\edge^{n+1}$ be defined by:
\begin{multline} \label{eq:def_de}
(\delta \varphi_e)^{n+1}_\edge=\varphi_e(e_K^{n+1})-\varphi_e(e_\edge^{n+1})
+ \varphi'_e(e_K^{n+1})\ \bigr[e_{KL}^{n+1}-e_K^{n+1}\bigl]
\\
+ \frac 1 2\,\bigl[\varphi'_e(e_K^{n+1})+\varphi'_e(e_L^{n+1})\bigr]\ \bigl[e_\edge^{n+1}-e_{KL}^{n+1}\bigr],
\end{multline}
and $(\delta \!R_e)_K^{n+1}$ the remainder term given by:
\begin{equation} \label{deltaRe}
|K|\,(\delta \!R_e)_K^{n+1} = \sum_{\edge\in\edges(K)}  (\delta \varphi_e)^{n+1}_\edge\ F_{K,\edge}^{n+1}.
\end{equation}
Thanks to the assumption \eqref{eq:H-e-imp} we get:
\begin{multline}\label{eq:ent_ei_c}  \frac{|K|}{\delta t} \Bigl[\rho_K^{n+1} \varphi_e(e_K^{n+1})-\rho_K^n \varphi_e(e_K^n)\Bigr]
+ \sum_{\edge\in \edges(K)} \varphi_e(e_\edge^{n+1}) F_{K,\edge}^{n+1}
\\
+ \varphi_e'(e_K^{n+1}) p_K^{n+1} \sum_{\edge\in \edges(K)} |\edge|\  u_{K,\edge}^{n+1}
+ |K|\ (\delta\!R_e)_K^{n+1}
\leq 0.
\end{multline}
In addition, $\delta\!R_e$ satisfies the following inequality:
\begin{equation}\label{eq:deltaRe}
\norm{\delta\!R_e}_{-1,1,\star}  \leq 3\, M^2\ |\varphi_e'|_\infty\ \normxbv{e}\ h_\mesh.
\end{equation} 
Combining the inequalities \eqref{eq:ent_mi_c} and \eqref{eq:ent_ei_c} and thanks to \eqref{prop-entropie},  \eqref{eq:deltaRm} and  \eqref{eq:deltaRe} concludes the proof of the theorem. 
\end{proof}
%
%--------------------------------------------------------------------------------------------------------------------------
%
\subsection{Explicit schemes} \label{sec:explicit}

The general form of the discrete analogue of System \eqref{eq:cont} for an explicit scheme reads:
\begin{subequations}\label{eq:expl}
\begin{align}
\nonumber
& \mbox{For } K \in \mesh,\ 0 \leq n \leq N-1, 
\\[1ex] 
\label{eq:mass_e} &
\quad \frac{|K|}{\delta t} (\rho_K^{n+1}-\rho_K^n) + \sum_{\edge\in \edges(K)} F_{K,\edge}^n = 0,
\displaybreak[1] \\[1ex] 
\label{eq:e_int_e} &
\quad \frac{|K|}{\delta t} (\rho_K^{n+1}e_K^{n+1}-\rho_K^n e_K^n) + \sum_{\edge\in \edges(K)} F_{K,\edge}^n e_\edge^n
+ p_K^n \sum_{\edge\in \edges(K)} |\edge|\,u_{K,\edge}^n \geq 0,
\displaybreak[1] \\[1ex] \label{eq:etat_e} &
\quad p_K^n=(\gamma-1)\, \rho_K^n\, e_K^n,
\end{align}\end{subequations}
where the numerical mass flux $F_{K,\edge}^n$ is still defined by \eqref{mass-flux}.

\medskip
Let $\rho_{KL}^n$ (resp. $e_{KL}^n$) be the real number defined by Equation \eqref{eq:conv_int} with $x_K=\rho^n_K$ (resp. $x_K=e^n_K$) and $x_L=\rho^n_L$ (resp. $x_L=e^n_L$) and $\varphi=\varphi_\rho$ (resp. $\varphi=\varphi_e$) and let us assume that for $\edge \in \edgesint$, $\edge=K|L$ and for $0 \leq n \leq N-1$,
\begin{align} \label{eq:H_rho^exp} &
\rho_\edge^n \in \li \rho_K^n,\ \rho_{KL}^n \ri \mbox{ if } u_{K,\edge}^n \geq 0, \qquad \rho_\edge^n \in \li \rho_L^n,\ \rho_{KL}^n\ri \mbox{ otherwise.}
\\ \label{eq:H_e^exp} &
e_\edge^n \in \li e_K^n,\ e_{KL}^n \ri \mbox{ if } u_{K,\edge}^n \geq 0,\qquad e_\edge^n \in \li e_L^n,\ e_{KL}^n\ri \mbox{ otherwise.} 
\end{align}

With these two conditions, Theorem \ref{theo:entropie} below yields a weak discrete entropy inequality, in the sense that a remainder term exists which tends to 0 (under some conditions) with the mesh and time steps, but its sign is unknown.
However, in the case of an upwind approximation of the density and the internal energy on the faces of the mesh (note that \eqref{eq:H_rho^exp} and  \eqref{eq:H_e^exp} are satisfied for such approximations), a local discrete entropy inequality can be obtained under the following additional conditions. 
\begin{enumerate}
\item First, the normal face velocities $u_{K,\sigma}$ in the mass flux \eqref{mass-flux} are assumed to be either 
\bli
\item computed from a discrete velocity field $\bfu$:
\begin{equation} \label{hyp-velocity}
u_{K,\edge} = \bfu_\edge \cdot \bfn_{K,\edge},
\end{equation}
where $\bfn_{K,\edge}$ is the unit normal vector to $\edge$ outward $K$ and $\bfu_\edge$ is an approximation of the velocity at the face, which may be the discrete unknown itself (when the velocity degrees of freedom are those of a non conforming Crouzeix-Raviart or Rannacher-Turek approximation, see e.g. \cite{gas-18-mus}) or an interpolation (for instance, for a colocated arrangement of the unknowns, as in \cite{her-19-cel}). 
\item the unknown themselves in the case of the staggered MAC scheme, since only the normal velocity is approximated in this case, see e.g. \cite{gas-18-mus}.
\end{list}
  
For $1 \leq r$, we then define the following discrete norm:
\begin{equation} \label{normq}
\norm{\bfu}_{L^r(0,T;W^{1,r}_\mesh)}^r = \sum_{i=1}^d\sum_{n=0}^N \delta t \sum_{K\in\mesh}
\ \sum_{(\edge,\edge')\in \edges^{(i)}(K)^2} |K| \left(\frac{u_{\edge,i}^n-u_{\edge',i}^n}{h_K} \right)^r,
\end{equation}
where $\edges^{(i)}(K)=\edges(K)$ for the Crouzeix-Raviart or Rannacher-Turek case and $\edges^{(i)}(K)$ is restricted to the two faces of $K$ perpendicular to the $i^{th}$ vector of the canonical basis of $\xR^d$ in the case of the MAC scheme. 

\begin{remark}[Discrete $L^r(W^{1,r})$ norm of the velocity]
It is reasonable to suppose that, under regularity assumptions of the mesh whose precise statement depends the space approximation at hand, this norm is equivalent to the standard finite-volume discrete $L^r(0,T; W^{1,r})$ norm \cite{eym-00-fin}; it is indeed true for usual cells (in particular, with a bounded number of faces) for staggered discretizations and for a convex interpolation of the velocity at the faces for colocated schemes.	
\end{remark}

\item Second, the following CFL conditions hold:
\begin{align} \label{eq:cfl_rho} &
\delta t \leq \frac{|K|}{\displaystyle
\sum_{\edge\in \edges(K)} \frac{\varphi_\rho''(\tilde \rho_K^{n+1/2})^2}{\varphi_\rho''(\rho_{K,\edge}^n)}\ |\edge|\ (u_{K,\edge}^n)^-},
\\[1ex] \label{eq:cfl_e} &
\delta t \leq \frac{\varphi_e''(e_K^{n+1/2})\ |K|\ \rho_K^{n+1}}
{\displaystyle \sum_{\edge\in \edges(K)} \frac{\varphi_e''(\tilde e_K^{n+1/2})^2}{\varphi_e''(e_{K,\edge}^n)}\ (F_{K,\edge}^n)^-}.
\end{align}
where $\tilde \rho_K^{n+1/2} \in \li \rho_K^n, \rho_K^{n+1}\ri$, $\rho_{K,\edge}^n \in \li \rho_K^n,\ \rho_L^n \ri$, $\tilde e_K^{n+1/2} \in \li e_K^n, e_K^{n+1}\ri$ and $e_{K,\edge}^n \in \li e_K^n,\ e_L^n \ri$ are defined by:
\begin{align}\label{tilderho-exp} &
\varphi_\rho''(\tilde \rho_K^{n+1/2})\,(\rho_K^{n+1}-\rho_K^n)^2 =\varphi_\rho'(\rho_K^{n+1})-\varphi_\rho'(\rho_K^{n}),
\\ \label{rhoKedge-exp} &
\varphi_\rho''(\rho_{K,\edge}^n)\ \bigl(\rho_K^n - \rho_L^n \bigr)^2 = \varphi(\rho_L^{n}) - \varphi_\rho(\rho_K^{n}) - \varphi_\rho'(\rho_K^{n}) \bigl(\rho_K^n - \rho_L^n \bigr),
\\ \label{tilde-e-exp} &
\varphi_e''(\tilde e_K^{n+1/2})\,(e_K^{n+1}-e_K^n)^2 =\varphi_e'(e_K^{n+1})-\varphi_e'(e_K^{n}),
\\ \label{eKedge-exp} &
\varphi_e''(e_{K,\edge}^n)\ \bigl(e_K^n - e_L^n \bigr)^2 = \varphi_e(e_L^{n}) - \varphi_e(e_K^{n}) - \varphi_e'(e_K^{n}) \bigl(e_K^n - e_L^n \bigr).
\end{align}
\end{enumerate}

\begin{theorem}[Discrete entropy inequalities, explicit schemes]\label{theo:entropie}
Let $\rho$ and $e$ satisfy the relations of the scheme \eqref{eq:expl}.
Let $M \geq 1$ and let us suppose that $\rho_K^n \leq M$, $1/ \rho_K^n \leq M$, $e_K^n \leq M$, $1/e_K^n \leq M$ and $|u_{K,\edge}| \leq M$, for $K \in \mesh$, $\edge \in \edges(K)$ and $0 \leq n \leq N$.
Assume that the discretization of the convection term in \eqref{eq:mass_e} and \eqref{eq:e_int_e} satisfies the assumptions \eqref{eq:H_rho^exp} and \eqref{eq:H_e^exp} respectively.
Let $\eta$ be defined by \eqref{eq:def_eta}.
Then any solution of the scheme \eqref{eq:expl} satisfies, for any $K\in\mesh$ and $0 \leq n \leq N-1$:
\[
\frac{|K|}{\delta t} (\eta_K^{n+1}-\eta_K^n)
+ \sum_{\edge\in \edges(K)} |\edge|\ \eta_\edge^n u_{K,\edge}^n + |K|\ (R_\eta)_K^n \leq 0,
\]
where  $R_\eta =R_{\eta,1}+R_{\eta,2}$ with:
\begin{align*} &
\norm{R_{\eta,1}}_{-1,1,\star}  \leq 3M\, \Bigl(|\varphi'_\rho|_\infty\,\normxbv{\rho}+M\ |\varphi'_e|_\infty\,\normxbv{e}\Bigr)\ h_\mesh,
\\ &
\norm{R_{\eta,2}}_{L^1} \leq M^2 \ \Bigl(|\varphi''_\rho|_\infty\,\normtbv{\rho}+|\varphi''_e|_\infty\,\normtbv{e}\Bigr)
\ \frac{\delta t}{\underline h_\mesh},
\end{align*}
where $\underline h_\mesh$ is defined by \eqref{eq:def_h_bar},   $|\varphi'_\rho|_\infty=\max(|\varphi'_\rho(1/M)|,\ |\varphi'_\rho(M)|)$, $|\varphi'_e|_\infty=\max(|\varphi'_e(1/M)|,\ |\varphi'_e(M)|)$, and  $|\varphi''_\rho|_\infty$ and $|\varphi''_e|_\infty$  denote the maximum value taken by $\varphi''_\rho$ and $\varphi''_e$ respectively on the interval $[1/M,\ M]$.

\medskip 
Moreover, if the discretization of the convection term in \eqref{eq:mass_e} and \eqref{eq:e_int_e} is upwind, and if the normal face velocities $u_{K,\edge}$ satisfy \eqref{hyp-velocity}, under the CFL conditions \eqref{eq:cfl_rho} and \eqref{eq:cfl_e}, we also have (with a different expression for $R_\eta$):
\begin{equation}\label{eq:R_ent_e}
\norm{R_\eta}_{L^1} \leq f_\mesh \ C_\mesh\ M^{(2q-1)/q}\ |\varphi''_\rho|_\infty\ \normtbv{\rho}^{\frac 1 q}\ \norm{\bfu}_{L^{q'}(0,T;W^{1,q'}_\mesh)}  \delta t^{\frac 1 q}.
\end{equation}
where $q \geq 1$, $q'\geq 1$ and $\dfrac 1 q + \dfrac 1 {q'}=1$, $f_\mesh$ and  $C_\mesh$ are defined by \eqref{eq:def_Cm}.
\end{theorem}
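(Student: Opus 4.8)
\medskip
\noindent\textbf{Proof (sketch).}
The plan is to transpose to the explicit time level the two–step argument used for the implicit schemes: I would first derive a discrete entropy balance attached to the mass equation \eqref{eq:mass_e} and another attached to the internal energy equation \eqref{eq:e_int_e}, each as an \emph{exact} identity carrying remainder terms, and then add them and use the algebraic relation \eqref{prop-entropie} to cancel the divergence contributions. The decisive bookkeeping choice is to multiply \eqref{eq:mass_e} by $\varphi_\rho'(\rho_K^n)$ and \eqref{eq:e_int_e} by $\varphi_e'(e_K^n)$, both at the level $n$: with this choice the two terms $[\rho_K^n\varphi_\rho'(\rho_K^n)-\varphi_\rho(\rho_K^n)]\sum_{\edge\in\edges(K)}|\edge|u_{K,\edge}^n$ and $\varphi_e'(e_K^n)\,p_K^n\sum_{\edge\in\edges(K)}|\edge|u_{K,\edge}^n$ are exactly opposite by \eqref{prop-entropie} and the equation of state \eqref{eq:etat_e}, and disappear upon summation; moreover $\varphi_e'(e_K^n)<0$ turns the energy inequality \eqref{eq:e_int_e} into the correct direction. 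For the energy term I would first use the mass balance \eqref{eq:mass_e} to put $\frac{|K|}{\delta t}(\rho_K^{n+1}e_K^{n+1}-\rho_K^n e_K^n)+\sum_\edge F_{K,\edge}^n e_\edge^n$ in the transport form $\frac{|K|}{\delta t}\rho_K^{n+1}(e_K^{n+1}-e_K^n)+\sum_\edge F_{K,\edge}^n(e_\edge^n-e_K^n)$, exactly as in the proof of Lemma \ref{lmm:tr}.

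Two families of remainders then appear. A Taylor expansion at level $n$ of each time difference produces $-\tfrac12\frac{|K|}{\delta t}\varphi_\rho''(\tilde\rho_K^{n+1/2})(\rho_K^{n+1}-\rho_K^n)^2$ and, for the energy, $-\tfrac12\frac{|K|}{\delta t}\rho_K^{n+1}\varphi_e''(\tilde e_K^{n+1/2})(e_K^{n+1}-e_K^n)^2$, with $\tilde\rho_K^{n+1/2},\tilde e_K^{n+1/2}$ the intermediate values \eqref{tilderho-exp} and \eqref{tilde-e-exp}; since $\varphi_\rho,\varphi_e$ are convex these are the \emph{negative} anti-diffusion terms created by the forward Euler step, they carry the wrong sign, and they cannot be discarded. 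On the convection fluxes I would reproduce the splitting of Theorem \ref{thrm:impl_muscl}: introduce $\rho_{KL}^n,e_{KL}^n$ through Lemma \ref{lem:int_convexity} and write each face remainder as a conservative piece of the form $|\edge|\,(\delta\varphi_\rho)_\edge^n\,u_{K,\edge}^n$ (and its energy analogue) plus a piece which, under \eqref{eq:H_rho^exp}--\eqref{eq:H_e^exp} and the monotonicity of $\varphi_\rho',\varphi_e'$, is nonnegative.

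For the first (general) statement I would take $R_{\eta,1}$ to be the sum of the conservative face pieces and $R_{\eta,2}$ the two anti-diffusion terms, and discard the nonnegative spatial pieces to obtain the inequality. The bound on $\norm{R_{\eta,1}}_{-1,1,\star}$ is the one already proved for $\delta\!R_m$ and $\delta\!R_e$ in Theorem \ref{thrm:impl_muscl} (conservativity of the face term against a test function, convexity, and the $L^\infty$ bounds), which explains the factor $h_\mesh$ and the $\normxbv{\cdot}$ norms. For $\norm{R_{\eta,2}}_{L^1}$ I would use the mass balance to write $|\rho_K^{n+1}-\rho_K^n|=\frac{\delta t}{|K|}\bigl|\sum_\edge F_{K,\edge}^n\bigr|\le M^2\,\delta t/\underline h_\mesh$ (and similarly for $e$), absorbing one factor of each square into $\delta t/\underline h_\mesh$ while the remaining factor yields $\normtbv{\rho}$ and $\normtbv{e}$; this produces the stated $L^1$ estimate with its $\delta t/\underline h_\mesh$ scaling, whose sign is indeed not used.

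For the upwind case under the CFL conditions \eqref{eq:cfl_rho}--\eqref{eq:cfl_e} the strategy is reversed: rather than discarding the nonnegative spatial diffusion I would keep it and use it to absorb the negative anti-diffusion. With the upwind choice the spatial remainder is exactly $\sum_\edge\tfrac12|\edge|\varphi_\rho''(\rho_{K,\edge}^n)(\rho_L^n-\rho_K^n)^2(u_{K,\edge}^n)^-$ for the mass part, and its energy analogue built on $(F_{K,\edge}^n)^-$. After inserting the flux expression of $\rho_K^{n+1}-\rho_K^n$ into the anti-diffusion and applying a weighted Cauchy--Schwarz/Young inequality face by face --- with precisely the weights $\varphi_\rho''(\tilde\rho_K^{n+1/2})^2/\varphi_\rho''(\rho_{K,\edge}^n)$ and $\varphi_e''(\tilde e_K^{n+1/2})^2/\varphi_e''(e_{K,\edge}^n)$ appearing in \eqref{eq:cfl_rho}--\eqref{eq:cfl_e} --- the pure transport part of the anti-diffusion is dominated by the spatial diffusion exactly when the CFL holds. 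The main obstacle, and the reason a residual $R_\eta$ survives, is that the flow is compressible: the increment $\rho_K^{n+1}-\rho_K^n$ is not a pure flux difference but also contains the compression term $\rho_K^n\sum_\edge|\edge|u_{K,\edge}^n$, which the upwind diffusion does not control. I would isolate this term and bound it by Hölder's inequality of conjugate exponents $q,q'$: one factor carries the density time increments --- and, using the mass balance once more together with \eqref{eq:def_h_bar}, yields $\normtbv{\rho}^{1/q}$ and a power $\delta t^{1/q}$ --- while the other carries the discrete divergence $\sum_\edge|\edge|u_{K,\edge}^n$, which, thanks to \eqref{hyp-velocity} and the mesh constants $f_\mesh,C_\mesh$ of \eqref{eq:def_Cm}, is recast as the discrete gradient norm $\norm{\bfu}_{L^{q'}(0,T;W^{1,q'}_\mesh)}$; this is exactly the content of \eqref{eq:R_ent_e}. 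Controlling this compression residual is the delicate point, and it is the structural reason why only a weak (Lax-consistent) entropy inequality is reached for the explicit schemes and why the extra velocity estimate built on \eqref{hyp-velocity} is required.
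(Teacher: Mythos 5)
Your overall skeleton is the paper's: two renormalization identities (one for the mass balance, one for the internal energy balance), cancellation of the compression and pressure contributions through \eqref{prop-entropie}, splitting of the face remainders into a nonnegative part plus a conservative part under \eqref{eq:H_rho^exp}--\eqref{eq:H_e^exp} (bounded in $\norm{\cdot}_{-1,1,\star}$ exactly as in Theorem \ref{thrm:impl_muscl}), and, in the upwind/CFL case, absorption of the anti-diffusion into the upwind diffusion by Young's inequality, leaving the compression residual $\rho_K^n\sum_{\edge\in\edges(K)}|\edge|\,u^n_{K,\edge}$ to be estimated by H\"older's inequality against the discrete $W^{1,q'}$ norm of the velocity; this is precisely the content of Propositions \ref{prop:rho} and \ref{prop:e}. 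For the mass equation your bookkeeping (multiplier $\varphi_\rho'(\rho_K^n)$, at level $n$) is a legitimate variant: the resulting quadratic anti-diffusion $\frac{|K|}{2\delta t}\varphi_\rho''(\tilde\rho)(\rho_K^{n+1}-\rho_K^n)^2$ is controlled because \eqref{eq:mass_e} is an \emph{equality}, so one factor $|\rho_K^{n+1}-\rho_K^n|$ may be replaced by $\frac{\delta t}{|K|}\bigl|\sum_{\edge}F^n_{K,\edge}\bigr|\le M^2\delta t/\underline h_\mesh$; modulo the mass balance this is the same object as the paper's cross term $\bigl(\varphi_\rho'(\rho_K^{n+1})-\varphi_\rho'(\rho_K^n)\bigr)\sum_\edge F^n_{K,\edge}$.

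The gap is in the internal energy equation. Multiplying by $\varphi_e'(e_K^n)$ produces the anti-diffusion $\frac{|K|}{2\delta t}\rho_K^{n+1}\varphi_e''(\tilde e)(e_K^{n+1}-e_K^n)^2$, \emph{quadratic} in the time increment of $e$, and your ``and similarly for $e$'' step needs $|e_K^{n+1}-e_K^n|\lesssim M^2\delta t/\underline h_\mesh$. No equation supplies this: \eqref{eq:e_int_e} is only an \emph{inequality} (the scheme's internal energy balance carries the nonnegative, otherwise uncontrolled, corrective source), so it bounds $\rho_K^{n+1}e_K^{n+1}-\rho_K^n e_K^n$ from below only. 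With the assumed $L^\infty$ and BV controls the best available bound is $\sum_n\sum_K|K|(e_K^{n+1}-e_K^n)^2\le C(M)\,\normtbv{e}$, which is bounded but carries no factor $\delta t/\underline h_\mesh$: the stated estimate on $R_{\eta,2}$ is not reached, and in the upwind/CFL case you cannot ``insert the flux expression'' of $e_K^{n+1}-e_K^n$, so the absorption argument fails there as well. This is exactly why the paper renormalizes the \emph{time-difference} term with $\varphi_e'(e_K^{n+1})$ while keeping level $n$ on the flux term: the Taylor remainder then has the \emph{good} sign and is discarded (the multiplied inequality points the right way since $\varphi_e'<0$), and the anti-diffusion appears instead as the cross term $\bigl(\varphi_e'(e_K^{n+1})-\varphi_e'(e_K^n)\bigr)\sum_\edge F^n_{K,\edge}(e^n_\edge-e^n_K)$, which is \emph{linear} in $e_K^{n+1}-e_K^n$, hence bounded in $L^1$ by $M^2|\varphi_e''|_\infty\normtbv{e}\,\delta t/\underline h_\mesh$ from the $L^\infty$ bounds alone, and absorbable by the upwind diffusion under \eqref{eq:cfl_e} (whence $(R_e)_K^n=0$ in that case). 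To repair your proof, keep your level-$n$ multiplier for the flux and pressure terms (your pressure cancellation via \eqref{prop-entropie} is exact, indeed cleaner than the paper's, whose mismatch $\bigl(\varphi_e'(e_K^{n+1})-\varphi_e'(e_K^n)\bigr)p_K^n\sum_\edge|\edge|\,u^n_{K,\edge}$ is again linear in the increment and harmless), but switch to $\varphi_e'(e_K^{n+1})$ on the time-difference term.
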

\begin{proof}
The results are obtained by applying the propositions \ref{prop:rho} and \ref{prop:e} below with $\varphi=\varphi_\rho$ and $\varphi=\varphi_e$ respectively.
\end{proof}
%
% --- prop 1 ---
%

\medskip
The aim of the following proposition is to derive a discrete analogue of Relation \eqref{eq:ent_m}. 

\begin{proposition}[Discrete renormalized forms of the mass balance equation]\label{prop:rho}
Let $\varphi$ be a twice continuously differentiable convex function from $(0,+\infty)$ to $\xR$, and let $\rho$ satisfy \eqref{eq:mass_e}.
Let $M \geq 1$ and let us suppose that $\rho_K^n \leq M$, $1/ \rho_K^n \leq M$ and $|u_{K,\edge}| \leq M$, for $K \in \mesh$, $\edge \in \edges(K)$ and $0 \leq n \leq N$.
Let $|\varphi'|_\infty=\max(|\varphi'(1/M)|,\ |\varphi'(M)|)$ and $|\varphi''|_\infty$ be the maximum value taken by $\varphi''$ on the interval $[1/M,\ M]$.
Assume that $\rho_\edge^n $ satisfies \eqref{eq:H_rho^exp}.
Then the following inequality holds:
\begin{multline}
\frac{|K|}{\delta t} \bigl[\varphi(\rho_K^{n+1})-\varphi(\rho_K^n)\bigr]
+ \sum_{\edge\in \edges(K)} |\edge| \varphi(\rho_\edge^n) u_{K,\edge}^n
\\ + \bigl(\varphi'(\rho_K^n) \rho_K^n - \varphi(\rho_K^n)\bigr)\ \bigl[\sum_{\edge\in \edges(K)} |\edge|\, u_{K,\edge}^n\bigr]
+ |K|\,(R_\rho)_K^{n+1} \leq 0,
\label{est:rhoexp}
\end{multline}
where the remainder $(R_\rho)_K^{n+1}=(R_{\rho,1})_K^{n+1}+(R_{\rho,2})_K^{n+1}$ with:
\begin{align*}
&\norm{R_{\rho,1}}_{-1,1,\star}  \leq 3M\ |\varphi'|_\infty\ \normxbv{\rho}\ h_\mesh, \\
&\norm{R_{\rho,2}}_{L^1} \leq M^2\ |\varphi''|_\infty\ \normtbv{\rho}\ \frac{\delta t}{\underline h_\mesh},
\end{align*} 	
where $\underline h_\mesh$ is defined by \eqref{eq:def_h_bar}.

\medskip
Assume furthermore that the normal face velocities $u_{K,\edge}$ satisfy \eqref{hyp-velocity}, that the discretization of the convection term in \eqref{eq:mass_e} is upwind and that the CFL condition \eqref{eq:cfl_rho} holds with $\varphi$ instead of $\varphi_\rho$.
Then \eqref{est:rhoexp} still holds (with a different expression for $R_\rho$) and:
\[
\norm{R_\rho}_{L^1} \leq f_\mesh \ C_\mesh\ M^{(2q-1)/q}\ |\varphi''|_\infty\ \normtbv{\rho}^{1/q}\ \norm{\bfu}_{L^{q'}(0,T;W^{1,q'}_\mesh)}\ \delta t^{1/q},
\]
where $q \geq 1$, $q'\geq 1$, $\dfrac 1 q + \dfrac 1 {q'}=1$, $\norm{\cdot}_{L^{q'}(0,T;W^{1,q'}_\mesh)}$ is defined by \eqref{normq},  $f_\mesh$ and $C_\mesh$ are defined by \eqref{eq:def_Cm} and $C$ only depends on the maximal number of faces of the mesh cells.
\end{proposition}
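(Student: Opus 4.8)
The plan is to reproduce, at the fully discrete and explicit level, the continuous renormalisation \eqref{eq:ent_m}: I would multiply the mass balance \eqref{eq:mass_e} by $\varphi'(\rho_K^n)$ (the \emph{old} time level, to keep the step explicit) and reorganise the two resulting contributions. For the discrete time derivative a second-order Taylor expansion gives
\[
\varphi'(\rho_K^n)(\rho_K^{n+1}-\rho_K^n)=\varphi(\rho_K^{n+1})-\varphi(\rho_K^n)-\tfrac12\varphi''(\tilde\rho_K^{n+1/2})(\rho_K^{n+1}-\rho_K^n)^2,
\]
with $\tilde\rho_K^{n+1/2}\in\li\rho_K^n,\rho_K^{n+1}\ri$; by convexity the quadratic correction is nonnegative, which is exactly the \emph{anti-diffusive} contribution typical of explicit time stepping. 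For the convective part I would use the elementary identity $\varphi'(\rho_K^n)\rho_\edge^n=\varphi(\rho_\edge^n)+\bigl(\varphi'(\rho_K^n)\rho_K^n-\varphi(\rho_K^n)\bigr)-D_\varphi(\rho_\edge^n,\rho_K^n)$, where $D_\varphi(a,b)=\varphi(a)-\varphi(b)-\varphi'(b)(a-b)\ge0$ is the Bregman remainder. Collecting these, the multiplied equation becomes an exact local identity whose left-hand side is the sum of the three transport terms of \eqref{est:rhoexp} and whose right-hand side is the remainder $\tfrac{|K|}{2\delta t}\varphi''(\tilde\rho_K^{n+1/2})(\rho_K^{n+1}-\rho_K^n)^2+\sum_{\edge\in\edges(K)}|\edge|D_\varphi(\rho_\edge^n,\rho_K^n)u_{K,\edge}^n$, a (bad) time part and a face part.

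For the first, general part, under the sole limitation assumption \eqref{eq:H_rho^exp}, I would split the face part exactly as in the implicit MUSCL case (Theorem \ref{thrm:impl_muscl}): introducing $\rho_{KL}^n$ and the quantity $(\delta\varphi_\rho)_\edge^n$ modelled on \eqref{eq:def_drho}, one writes $-D_\varphi(\rho_\edge^n,\rho_K^n)=(\delta\varphi_\rho)_\edge^n+\tfrac12\bigl[\varphi'(\rho_K^n)-\varphi'(\rho_L^n)\bigr](\rho_\edge^n-\rho_{KL}^n)$. The second piece, multiplied by $u_{K,\edge}^n$, is nonnegative under \eqref{eq:H_rho^exp} (since $\varphi'$ is increasing) and is kept on the favourable side to produce the inequality $\le0$; the first piece is, thanks to the defining relation for $\rho_{KL}^n$ (Lemma \ref{lem:int_convexity}), \emph{symmetric} in $K\leftrightarrow L$, so its flux is conservative and defines $R_{\rho,1}$. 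Using the convexity bound $|(\delta\varphi_\rho)_\edge^n|\le3\max(|\varphi'(\rho_K^n)|,|\varphi'(\rho_L^n)|)\,|\rho_K^n-\rho_L^n|$ and testing against $\psi$, conservativity reorganises the cell sum into a face sum weighted by $(\psi_K^n-\psi_L^n)$, giving the $\norm{\cdot}_{-1,1,\star}$ bound with the factor $h_\mesh$. The anti-diffusive time term becomes $R_{\rho,2}$, whose $L^1$ bound follows from the a priori estimate $|\rho_K^{n+1}-\rho_K^n|\le M^2\,\delta t/\underline h_\mesh$ read off \eqref{eq:mass_e}, which converts one factor $(\rho_K^{n+1}-\rho_K^n)^2$ into $|\rho_K^{n+1}-\rho_K^n|\cdot M^2\delta t/\underline h_\mesh$, i.e.\ into $\normtbv{\rho}$.

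For the second, upwind part under CFL, the face remainder is now the pure diffusion term $\sum_\edge|\edge|D_\varphi(\rho_\edge^n,\rho_K^n)u_{K,\edge}^n=-\sum_\edge|\edge|\varphi''(\rho_{K,\edge}^n)(\rho_K^n-\rho_L^n)^2(u_{K,\edge}^n)^-\le0$, with $\rho_{K,\edge}^n$ from \eqref{rhoKedge-exp}, and the whole point is to let this negative quantity absorb the bad time term. Using \eqref{eq:mass_e} I would write $\rho_K^{n+1}-\rho_K^n$ as a ``jump'' contribution $\sum_\edge|\edge|(\rho_K^n-\rho_L^n)(u_{K,\edge}^n)^-$ plus a ``divergence'' contribution $\rho_K^n\sum_\edge|\edge|u_{K,\edge}^n$. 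A Cauchy--Schwarz inequality on the jump contribution, weighted by $\varphi''(\rho_{K,\edge}^n)$, shows that its share of the time term is dominated by the diffusion term \emph{precisely} when the CFL condition \eqref{eq:cfl_rho} holds, which is how the ratio $\varphi''(\tilde\rho_K^{n+1/2})^2/\varphi''(\rho_{K,\edge}^n)$ arises. Only the divergence contribution survives in $R_\rho$; there I would use $\sum_{\edge\in\edges(K)}|\edge|\bfn_{K,\edge}=0$ to rewrite $\sum_\edge|\edge|u_{K,\edge}^n$ as a sum of velocity \emph{differences} across the faces of $K$, producing the discrete gradient norm $\norm{\bfu}_{L^{q'}(0,T;W^{1,q'}_\mesh)}$ and the constants $f_\mesh,C_\mesh$. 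Keeping one factor $|\rho_K^{n+1}-\rho_K^n|$ and noting that $\sum_{n,K}\delta t\,|K|\,|\rho_K^{n+1}-\rho_K^n|=\delta t\,\normtbv{\rho}$, a discrete H\"older inequality with exponents $q,q'$ over the pairs $(n,K)$ splits the sum into a factor $(\delta t\,\normtbv{\rho})^{1/q}$ and a velocity factor of order $\norm{\bfu}_{L^{q'}(0,T;W^{1,q'}_\mesh)}$, giving exactly the announced $\delta t^{1/q}\normtbv{\rho}^{1/q}\norm{\bfu}_{L^{q'}(0,T;W^{1,q'}_\mesh)}$ scaling.

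The routine parts are the Taylor and Bregman identities and the two elementary a priori bounds. The main obstacle is the second part: packaging the CFL condition as the exact threshold at which the upwind diffusion dominates the density-jump share of the anti-diffusive time term, and then carrying out the Cauchy--Schwarz/H\"older bookkeeping (with the correct powers of $M$, the geometric constants $f_\mesh,C_\mesh$, and the fractional power $\normtbv{\rho}^{1/q}$) so that only the divergence contribution, measured by the velocity gradient, remains. The delicate point is that the intermediate densities $\tilde\rho_K^{n+1/2}$ and $\rho_{K,\edge}^n$ sit at different locations, so the CFL weights must be chosen as in \eqref{eq:cfl_rho} for the two $\varphi''$ factors to compensate.
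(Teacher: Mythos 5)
For the first half of the proposition (general face values satisfying \eqref{eq:H_rho^exp}), your argument is correct and is a mild variant of the paper's: you multiply \eqref{eq:mass_e} by $\varphi'(\rho_K^n)$, so the whole anti-diffusion appears as the negative Taylor remainder $-\frac{|K|}{2\delta t}\varphi''(\tilde\rho_K^{n+1/2})(\rho_K^{n+1}-\rho_K^n)^2$, whereas the paper multiplies by $\varphi'(\rho_K^{n+1})$, which produces a \emph{positive} Taylor remainder $(R_1)_K^{n+1}= \frac 1 {2 \delta t} \varphi''(\rho_K^{n+1/2})(\rho_K^{n+1}-\rho_K^n)^2$ plus a switch term $R_K^{n+1}$, defined in \eqref{eq:def_ren}, which is \emph{linear} in $\rho_K^{n+1}-\rho_K^n$. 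For this first half both routes close: your face splitting through $(\delta\varphi_\rho)_\edge$ and Lemma \ref{lem:int_convexity} is exactly the paper's, and your $L^1$ bound on the quadratic time term, obtained from the a priori bound $|\rho_K^{n+1}-\rho_K^n|\le M^2\delta t/\underline h_\mesh$ read off the scheme, recovers the stated estimate on $R_{\rho,2}$ (even with a spare factor $1/2$).

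The gap is in the second (upwind plus CFL) half, and it is a direct consequence of your choice of multiplier. In the paper's ledger the positive term $(R_1)_K^{n+1}$ is precisely what absorbs the $(\rho_K^{n+1}-\rho_K^n)^2$ output of the Young inequality applied to the jump share of the switch term; the ratio $\varphi''(\tilde\rho_K^{n+1/2})^2/\varphi''(\rho_{K,\edge}^n)$ in \eqref{eq:cfl_rho} is calibrated for exactly this absorption, and what survives, $(R_{01})_K^{n+1}$, is linear both in $\rho_K^{n+1}-\rho_K^n$ and in the velocity differences, which is what makes the H\"older bookkeeping produce $\normtbv{\rho}^{1/q}\,\norm{\bfu}_{L^{q'}(0,T;W^{1,q'}_\mesh)}\,\delta t^{1/q}$. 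In your ledger this positive quadratic term does not exist: it has already been consumed inside your (negative) bad time term. Consequently, when you Young-split the cross term $\varphi''(\tilde\rho_K^{n+1/2})\,(\rho_K^{n+1}-\rho_K^n)\cdot\sum_{\edge}|\edge|(\rho_L^n-\rho_K^n)(u_{K,\edge}^n)^-$, the quadratic output $\tfrac12 (\rho_K^{n+1}-\rho_K^n)^2\sum_{\edge}|\edge|(u_{K,\edge}^n)^-\varphi''(\tilde\rho_K^{n+1/2})^2/\varphi''(\rho_{K,\edge}^n)$, of size $\frac{|K|}{2\delta t}(\rho_K^{n+1}-\rho_K^n)^2$ under \eqref{eq:cfl_rho}, has nowhere to go: it cannot be absorbed by your bad term (wrong sign), nor by $(R_2)$ (wrong variables), and it is not an admissible remainder (it is $O(1)$ in $L^1$, not small). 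The alternative of substituting the mass balance into \emph{both} factors of $(\rho_K^{n+1}-\rho_K^n)^2$ does close the jump absorption, but then (i) the CFL condition you need involves the first-power ratio $\varphi''(\tilde\rho_K^{n+1/2})/\varphi''(\rho_{K,\edge}^n)$, which is neither \eqref{eq:cfl_rho} nor implied by it, and (ii) the surviving divergence remainder is \emph{quadratic} in the velocity increments, which cannot be put in the claimed form, linear in $\norm{\bfu}_{L^{q'}(0,T;W^{1,q'}_\mesh)}$ with the factor $\normtbv{\rho}^{1/q}$. Your final bookkeeping paragraph ("keeping one factor $|\rho_K^{n+1}-\rho_K^n|$\dots") implicitly assumes the remainder is linear in the time increment times velocity differences — a structure your decomposition cannot deliver. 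The fix is the paper's: multiply by $\varphi'(\rho_K^{n+1})$, keep $(R_1)\ge 0$ as a separate good term, let the anti-diffusion appear as the switch term $R_K^{n+1}$ linear in $\rho_K^{n+1}-\rho_K^n$, split \emph{it} (not the quadratic term) into jump and divergence shares, and only then apply Young and H\"older.
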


\begin{proof}
Mimicking the formal computation performed at the continuous level, let us multiply \eqref{eq:mass_e} by $\varphi'(\rho_K^{n+1})$.
We get:
\[
\varphi'(\rho_K^{n+1}) \Bigl[\frac{|K|}{\delta t} (\rho_K^{n+1}-\rho_K^n)+ \sum_{\edge\in \edges(K)} F_{K,\edge}^n \Bigr] =
(T_1)_K^{n+1}+(T_2)_K^{n+1}+ |K|\,R_K^{n+1}=0,
\]
with
\begin{align} \nonumber &
(T_1)_K^{n+1}=\varphi'(\rho_K^{n+1})\ \frac{|K|}{\delta t} (\rho_K^{n+1}-\rho_K^n),
\quad (T_2)_K^{n+1}= \varphi'(\rho_K^n)\ \sum_{\edge\in \edges(K)} F_{K,\edge}^n,
\\ \label{eq:def_ren} &
|K|\,R_K^{n+1}=\bigl(\varphi'(\rho_K^{n+1})-\varphi'(\rho_K^n)\bigr)\ \sum_{\edge\in \edges(K)} F_{K,\edge}^n.
\end{align}
By a Taylor expansion, there exists $\rho_K^{n+1/2} \in \li\rho_K^n,\ \rho_K^{n+1}\ri$ such that:
\begin{multline}\label{eq:R_1}
(T_1)_K^{n+1}= \frac{|K|}{\delta t} \bigl[\varphi(\rho_K^{n+1})-\varphi(\rho_K^n)\bigr]+|K|\,(R_1)_K^{n+1}, 
\\
\mbox{with } (R_1)_K^{n+1}= \frac 1 {2 \delta t} \,\varphi''(\rho_K^{n+1/2})\,(\rho_K^{n+1}-\rho_K^n)^2 \geq 0.
\end{multline}
The term $(T_2)_K^{n+1}$ reads:
\begin{multline}
(T_2)_K^{n+1} = \sum_{\edge\in \edges(K)} |\edge|\, \varphi(\rho_\edge^n) u_{K,\edge}^n
\\
+ \bigl(\varphi'(\rho_K^n) \rho_K^n - \varphi(\rho_K^n)\bigr)\ \sum_{\edge\in \edges(K)} |\edge|  u_{K,\edge}^n
+ |K|\,(R_2)_K^{n+1},
\end{multline}
with
\[
|K|\,(R_2)_K^{n+1} =  \sum_{\edge\in \edges(K)} |\edge| 
\Bigl[\varphi(\rho_K^n) + \varphi'(\rho_K^n) (\rho_\edge^n - \rho_K^n) - \varphi(\rho_\edge^n) \Bigr] u_{K,\edge}^n. \label{R2exp}
\]
Thanks to assumption \eqref{eq:H_rho^exp}, the remainder $R_2$ is a sum of a non-negative part and a term tending to zero; indeed there exists $\delta\! R_2$ such that:
\[
R_2 \geq \delta\! R_2 \mbox{ and }
\norm{\delta\! R_2}_{-1,1,\star}  \leq 3M\ |\varphi'|_\infty\ \normxbv{\rho}\ h_\mesh.
\]
 This result is obtained by adapting the proof of the implicit case (indeed, up to a change of time exponents at the right-hand side from $n$ to $n+1$, the expression of $(R_2)_K^{n+1}$ is the same than the second term of $(R_m)_K^{n+1}$ in the expression \eqref{eq:R_mass}, and the computation from Relation \eqref{eq:def_drho} up to the end of the proof of \eqref{eq:deltaRm} may be reproduced, still with the same change of time exponents).

\medskip
Let us now prove that the remainder term $R=(R_K^n)_{K\in \mesh}^{n=0,\ldots,M}$ defined by \eqref{eq:def_ren} satisfies:
\begin{equation}
\norm{R}_{L^1} =\sum_{n=0}^{N-1} \delta t \sum_{K\in\mesh} |K|\, R_K^{n+1}  \leq M^2 \ |\varphi''|_\infty\ \normtbv{\rho}\ \frac{\delta t}{\underline h_\mesh}.
\label{est:R}
\end{equation}
Indeed, for $K \in \mesh$ and $0 \leq n \leq N$, we get:
\begin{align}\label{eq:def_R}
|K|\, R_K^{n+1} &=\bigl(\varphi'(\rho_K^{n+1})-\varphi'(\rho_K^n)\bigr)\ \sum_{\edge\in \edges(K)} F_{K,\edge}^n
\\ &= \varphi''(\tilde \rho_K^{n+1/2}) \bigl(\rho_K^{n+1}-\rho_K^n\bigr)\ \sum_{\edge\in \edges(K)} |\edge|\, \rho_\edge^n u_{K,\edge}^n,
\end{align}
where $\tilde \rho_K^{n+1/2}$ is defined by \eqref{tilderho-exp}.
Thus,
\[
\norm{R}_{L^1}=\sum_{n=0}^{N-1} \delta t \sum_{K\in\mesh} |K|\, R_K^{n+1} \leq
|\varphi''|_\infty\, M^2\ \sum_{n=0}^{N-1} \delta t \Bigl(\sum_{K\in\mesh} |\edge|\Bigr)\ |\rho_K^{n+1}-\rho_K^n|,
\]
which yields \eqref{est:R}.

\medskip
Let us now turn to the case where the discrete normal velocities satisfy \eqref{hyp-velocity} and the discretization of the density at the face $ \rho_\edge^n$ is upwind; in this case the remainder $R_2$ defined by \eqref{R2exp} satisfies:
\begin{equation}\label{eq:R_2_upwind}
|K|\,(R_2)_K^{n+1} =  \sum_{\edge=K|L} \frac 1 2\ |\edge|\ \varphi''(\rho_{K,\edge}^n)\ \bigl(\rho_K^n - \rho_L^n \bigr)^2 (u_{K,\edge}^n)^-,
\end{equation}
where $\rho_{K,\edge}^n$ is defined by \eqref{rhoKedge-exp}.
Therefore, $R_2$ is non-negative.
Starting from Equation \eqref{eq:def_R}, we may now reformulate the remainder term $R_K^{n+1}$ as $R_K^{n+1}=(R_{01})_K^{n+1} + (R_{02})_K^{n+1}$ with:
\begin{equation}
\begin{array}{l}\displaystyle
|K|\,(R_{01})_K^{n+1}= \varphi''(\tilde \rho_K^{n+1/2}) \bigl(\rho_K^{n+1}-\rho_K^n\bigr)\ \rho_K^n \bigl[\sum_{\edge\in \edges(K)} |\edge|\ u_{K,\edge}^n\bigr],
\\[3ex] \displaystyle
|K|\,(R_{02})_K^{n+1}=\varphi''(\tilde \rho_K^{n+1/2}) \bigl(\rho_K^{n+1}-\rho_K^n\bigr)
\ \bigl[\sum_{\edge\in \edges(K)} |\edge| (\rho_\edge^n-\rho_K^n) u_{K,\edge}^n\bigr].
\end{array}
\end{equation}
By Young's inequality, the second term may be estimated as follows:
\begin{multline*}
|K|\, |(R_{02})_K^{n+1}| \leq  \frac 1 2 \sum_{\edge\in \edges(K)}
|\edge|\ \varphi''(\rho_{K,\edge}^n)\ (u_{K,\edge}^n)^-\ \bigl(\rho_K^n-\rho_L^n\bigr)^2
\\+ \frac 1 2 \sum_{\edge\in \edges(K)}
|\edge|\ \frac{\varphi''(\tilde \rho_K^{n+1/2})^2}{\varphi''(\rho_{K,\edge}^n)}\ (u_{K,\edge}^n)^- \bigl(\rho_K^{n+1}-\rho_K^n\bigr)^2.
\end{multline*}
Therefore, in view of the expressions \eqref{eq:R_1} and \eqref{eq:R_2_upwind} of $(R_1)_K^n$ and $(R_2)_K^n$ respectively, we get $(R_1)_K^{n+1}+(R_2)_K^{n+1}+ (R_{02})_K^{n+1} \geq 0$ under the CFL condition \eqref{eq:cfl_rho} (with $\varphi$ instead of $\varphi_\rho$).
Let us now show that 
\begin{equation} \label{R01exp}
\norm{R_{01}}_{L^1} \leq f_\mesh\ C_\mesh\ M^{(2q-1)/q}\ |\varphi''|_\infty\ \normtbv{\rho}^{1/q}\ \norm{\bfu}_{L^{q'}(0,T;W^{1,q'}_\mesh)}\ \delta t^{1/q},
\end{equation}
with $q \geq 1$, $q'\geq 1$ and $\dfrac 1 q + \dfrac 1 {q'}=1$, where $f_\mesh$ and $C_\mesh$ is defined by \eqref{eq:def_Cm}.
To this purpose, we first observe that, in the Crouzeix-Raviart or Ranncher-Turek case, since $\sum_{\edge\in \edges(K)} |\edge|\ \bfn_{K,\edge}=0$, we may write:
\[
\sum_{\edge \in \edges(K)} |\edge|\ u^n_{K,\edge}=
\sum_{\edge \in \edges(K)} |\edge|\ \bfu^n_\edge \cdot \bfn_{K,\edge} = \sum_{\edge \in \edges(K)}|\edge|\ (\bfu^n_\edge-\bfu^n_K) \cdot \bfn_{K,\edge},
\]
where $\bfu_K^n$ stands for the mean value of the normal face velocities $(\bfu_\edge^n)_{\edge \in \edges(K)}$.
In the MAC case, we have $\sum_{\edge \in \edges^{(i)}(K)}|\edge|\ u^n_{K,\edge}=0$ for $1 \leq i \leq d$, and thus
\begin{multline*}
\sum_{\edge \in \edges(K)}|\edge|\ u^n_{K,\edge}=
\sum_{i=1}^d \sum_{\edge \in \edges^{(i)}(K)} |\edge|\ u^n_{\edge,i}\ \bfj^{(i)} \cdot \bfn_{K,\edge}=
\\
\sum_{i=1}^d \sum_{\edge \in \edges^{(i)}(K)} |\edge|\ (u^n_{\edge,i}-u^n_{K,i})\ \bfj^{(i)} \cdot \bfn_{K,\edge},
\end{multline*}
where $\bfj^{(i)}$ stands for the $i^{th}$ vector of the canonical basis of $\xR^d$ and $u^n_{K,i}$ stands for the mean value of the velocity over the two faces of $\edges^{(i)}(K)$.
In both cases, we obtain that:
\[
\Bigl| \sum_{\edge\in \edges(K)} |\edge|\ u^n_{K,\edge} \Bigr| \leq
2 \sum_{i=1}^d\ \sum_{(\edge,\edge') \in \edges^{(i)}(K)^2} (|\edge|+|\edge'|)\ |u^n_{\edge,i}-u^n_{\edge',i}|.
\]
Therefore, 
\begin{multline*}
|K|\,|(R_{01})_K^{n+1}| \leq 2\ |\varphi''|_\infty M\ \bigl|\rho_K^{n+1}-\rho_K^n\bigr|
\\
\sum_{i=1}^d\ \sum_{(\edge,\edge') \in \edges^{(i)}(K)^2} (|\edge|+|\edge'|)\ |u^n_{\edge,i}-u^n_{\edge',i}|.
\end{multline*}
We thus have, thanks to a H\"older estimate, for $q \geq 1$, $q' \geq 1$ and $\dfrac 1 q+ \dfrac 1 {q'}=1$:
\begin{multline*}
\norm{R_{01}}_{L^1}  = \sum_{n=0}^{N-1} \delta t \sum_{K\in\mesh} |K|\,(R_{01})_K^{n+1}
\\
\leq  2\ |\varphi''|_\infty M\ \Bigl[ \delta t \sum_{n=0}^{N-1} \sum_{K\in\mesh} |K|\ \bigl|\rho_K^{n+1}-\rho_K^n\bigr|^q
\Bigl(\sum_{i=1}^d\ \sum_{(\edge,\edge') \in \edges^{(i)}(K)^2} 1\Bigr) \Bigr]^{1/q}
\\
\Bigl[\sum_{n=0}^{N-1} \sum_{K\in\mesh} \sum_{i=1}^d \sum_{(\edge,\edge') \in \edges^{(i)}(K)^2}  
\delta t\ |K| \ \left(\frac{|u^n_{\edge,i}-u^n_{\edge',i}|}{h_K}\right)^{q'} \left(\frac{(|\edge|+|\edge'|)\,h_K}{|K|}\right)^{q'}
\Bigr]^{1/q'}.
\end{multline*}
Using $|\rho_K^{n+1}-\rho_K^n |^q \leq (2 M)^{q-1}\ |\rho_K^{n+1}-\rho_K^n |$ yields \eqref{R01exp}. 
\end{proof}
%
% --- prop 2 ---
%
The object of the following proposition is to mimick at the fully discrete level the computation of \eqref{eq:tr} and \eqref{eq:cvz}, applying it to $z= e$. 

\begin{proposition}[Inequalities derived from the internal energy balance]\label{prop:e}
Let $M \geq 1$ and let us suppose that $\rho_K^n \leq M$, $e_K^n<M$, $1/ e_K^n \leq M$ and $|u_{K,\edge}| \leq M$, for $K \in \mesh$, $\edge \in \edges(K)$ and $0 \leq n \leq N$.
Assume that the face approximation of the internal energy satisfies \eqref{eq:H_rho^exp}.
Let $\varphi$ be a twice continuously convex function from $(0,+\infty)$ to $\xR$.
Then 
\begin{multline}\label{eq:e_id_exp} 
\varphi'(e_K^{n+1})\ \Bigl[\frac{|K|}{\delta t} (\rho_K^{n+1}e_K^{n+1}-\rho_K^n e_K^n) + \sum_{\edge\in \edges(K)} F_{K,\edge}^n e_\edge^n \Bigr]
\\
\geq \frac{|K|}{\delta t} \bigl(\rho_K^{n+1}\varphi(e_K^{n+1})-\rho_K^n\, \varphi(e_K^n) \bigr)
+ \sum_{\edge\in \edges(K)} F_{K,\edge}^n\, \varphi(e_\edge^n)
+ |K|\,(R_e)_K^n,
\hspace{3ex}\end{multline}
with 
\begin{equation}
\norm{R_e}_{L^1} \leq 3M^2\ |\varphi'|_\infty\, \normxbv{e}\ h_\mesh + M^2\ |\varphi''|_\infty\,\normtbv{e}\ \frac{\delta t}{\underline h_\mesh}, \label{eq:remainder-exp-e}
\end{equation}
where $|\varphi'|_\infty=\max(|\varphi'(1/M)|,\ |\varphi'(M)|)$,  $|\varphi''|_\infty$ stands for the maximum value taken by $\varphi''$ over the interval $[1/M,\ M]$, and $\underline h_\mesh$ is defined by \eqref{eq:def_h_bar}.
If, furthermore, the approximation of $e_\edge^n$ in \eqref{eq:e_id_exp} is upwind, and the CFL condition \eqref{eq:cfl_e} holds (with $\varphi$ instead of $\varphi_e$) then $(R_e)_K^n=0$.
\end{proposition}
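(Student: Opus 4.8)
The plan is to reproduce, at the fully discrete explicit level, the renormalisation \eqref{eq:tr}--\eqref{eq:cvz} applied to $z=e$, proceeding as in the proof of Proposition \ref{prop:rho} but keeping track of the density weight. Denote by $\mathcal{C}_K$ the bracketed quantity on the left-hand side of \eqref{eq:e_id_exp}. The first step is a discrete transport form of this operator: using the mass balance \eqref{eq:mass_e} to substitute $\tfrac{|K|}{\delta t}(\rho_K^{n+1}-\rho_K^n)=-\sum_{\edge}F_{K,\edge}^n$ (this time extracting the factor $e_K^n$ rather than $e_K^{n+1}$), one obtains
\[
\mathcal{C}_K=\frac{|K|}{\delta t}\,\rho_K^{n+1}\,(e_K^{n+1}-e_K^n)+\sum_{\edge\in\edges(K)}F_{K,\edge}^n\,(e_\edge^n-e_K^n).
\]
This is the explicit counterpart of the transport form underlying Lemma \ref{lem:energy}; crucially, the convective part now involves only time-level-$n$ values, while the discrete time derivative carries $\rho_K^{n+1}$ (which is what appears in the numerator of the CFL condition \eqref{eq:cfl_e}).

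I would then multiply this identity by $\varphi'(e_K^{n+1})$ and process the two groups. A second-order Taylor expansion gives $\varphi'(e_K^{n+1})(e_K^{n+1}-e_K^n)=\varphi(e_K^{n+1})-\varphi(e_K^n)+\tfrac12\varphi''(e_K^{n+1/2})(e_K^{n+1}-e_K^n)^2$ with $e_K^{n+1/2}\in\li e_K^n,e_K^{n+1}\ri$, so that, after using \eqref{eq:mass_e} once more to turn $\tfrac{|K|}{\delta t}\rho_K^{n+1}(\varphi(e_K^{n+1})-\varphi(e_K^n))$ into the conservative difference $\tfrac{|K|}{\delta t}(\rho_K^{n+1}\varphi(e_K^{n+1})-\rho_K^n\varphi(e_K^n))$ plus a leftover $\varphi(e_K^n)\sum_\edge F_{K,\edge}^n$, the time-derivative group yields the target plus a non-negative remainder $(R_1)_K^n=\tfrac{1}{2\,\delta t}\,\rho_K^{n+1}\,\varphi''(e_K^{n+1/2})(e_K^{n+1}-e_K^n)^2$. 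Collecting that leftover with the convective term $\varphi'(e_K^{n+1})\sum_\edge F_{K,\edge}^n(e_\edge^n-e_K^n)$ and splitting $\varphi'(e_K^{n+1})=\varphi'(e_K^n)+(\varphi'(e_K^{n+1})-\varphi'(e_K^n))$, I recover the target flux $\sum_\edge F_{K,\edge}^n\varphi(e_\edge^n)$ up to two further remainders: the single-time-level convex term $(R_2)_K^n=-\tfrac1{|K|}\sum_\edge F_{K,\edge}^n\big[\varphi(e_\edge^n)-\varphi(e_K^n)-\varphi'(e_K^n)(e_\edge^n-e_K^n)\big]$ and the cross term $(R_3)_K^n=\tfrac1{|K|}\sum_\edge F_{K,\edge}^n(\varphi'(e_K^{n+1})-\varphi'(e_K^n))(e_\edge^n-e_K^n)$.

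For the general case under \eqref{eq:H_e^exp}, I dispose of these three terms as follows. The convex term $(R_1)$ is non-negative and is dropped, which produces the inequality in \eqref{eq:e_id_exp}. The term $(R_2)$ is treated exactly as the convective remainder $(R_2)$ of Proposition \ref{prop:rho} and of the implicit MUSCL scheme (Theorem \ref{thrm:impl_muscl}): introducing the intermediate value $e_{KL}^n$ and the quantity $(\delta\varphi_e)_\edge$ of \eqref{eq:def_de}, the assumption \eqref{eq:H_e^exp} makes the non-conservative part non-negative (hence droppable), while the remaining conservative part is controlled, via the telescoping argument leading to \eqref{eq:deltaRm}, by a constant times $\normxbv{e}\,h_\mesh$; this is the first contribution to $R_e$ in \eqref{eq:remainder-exp-e}. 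The genuinely explicit term $(R_3)$ is estimated crudely, using $|\varphi'(e_K^{n+1})-\varphi'(e_K^n)|\le|\varphi''|_\infty|e_K^{n+1}-e_K^n|$, the flux bound from \eqref{mass-flux} together with the $\xL^\infty$ hypotheses, and $\sum_{\edge\in\edges(K)}|\edge|\le|K|/\underline h_\mesh$; summing in space and time yields the second, $\normtbv{e}\,\delta t/\underline h_\mesh$, contribution.

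The delicate point is the upwind case under the CFL condition \eqref{eq:cfl_e}, where I must show that the three remainders together are non-negative so that $(R_e)_K^n$ can be taken to be zero. With upwinding the conservative leftover in $(R_2)$ vanishes and $(R_2)$ reduces to $\tfrac1{|K|}\sum_\edge\tfrac12|\edge|\rho_\edge^n\,\varphi''(e_{K,\edge}^n)(e_K^n-e_L^n)^2(u_{K,\edge}^n)^-\ge0$, as in \eqref{eq:R_2_upwind}, with $e_{K,\edge}^n$ given by \eqref{eKedge-exp}; moreover only the faces with $u_{K,\edge}^n<0$ contribute to $(R_3)$. Writing $\varphi'(e_K^{n+1})-\varphi'(e_K^n)=\varphi''(\tilde e_K^{n+1/2})(e_K^{n+1}-e_K^n)$ through \eqref{tilde-e-exp} and applying Young's inequality to each face term of $(R_3)$, I split it into a piece controlled by the time-derivative square in $(R_1)$ and a piece controlled by the spatial square in $(R_2)$; the CFL condition \eqref{eq:cfl_e} --- whose numerator carries precisely $\varphi_e''(e_K^{n+1/2})|K|\rho_K^{n+1}$ and whose denominator carries $\sum_\edge\varphi_e''(\tilde e_K^{n+1/2})^2/\varphi_e''(e_{K,\edge}^n)\,(F_{K,\edge}^n)^-$ --- is exactly what guarantees $(R_1)+(R_2)+(R_3)\ge0$. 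Matching the Young splitting to this CFL threshold is the main obstacle; it mirrors the corresponding step for the density in Proposition \ref{prop:rho}, the only new ingredient being the density weight $\rho_K^{n+1}$ inherited from the transport form above.
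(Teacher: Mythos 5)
Your proof is correct and follows essentially the same route as the paper's: the same transport-form rewriting of the convection operator via the mass balance, the same three remainders (your $(R_1)$, $(R_2)$, $(R_3)$ are exactly the paper's $(R_1)$, $(R_2)$ and $R$), the same splitting of $(R_2)$ into a non-negative part plus a conservative part handled as in the implicit MUSCL case, and the same Young-inequality absorption of the cross term into $(R_1)$ and $(R_2)$ under the CFL condition \eqref{eq:cfl_e} in the upwind case. The only differences are cosmetic (order in which you switch between conservative and non-conservative forms, and your correct reading of the hypothesis as \eqref{eq:H_e^exp} rather than the paper's typo \eqref{eq:H_rho^exp}).
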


\begin{proof}
First, the fully discrete identity corresponding to the semi-discrete identity \eqref{ident-convec} with $z=e$ is obtained thanks to the discrete mass equation; it reads:
\begin{multline*}
\frac{|K|}{\delta t} (\rho_K^{n+1} e_K^{n+1}-\rho_K^n e_K^n) + \sum_{\edge\in \edges(K)} F_{K,\edge}^n e_\edge^n  =
\frac{|K|}{\delta t} \rho_K^{n+1} (e_K^{n+1}-e_K^n)
\\
+ \sum_{\edge\in \edges(K)} F_{K,\edge}^n (e_\edge^n-e_K^n), \qquad \forall K \in \mesh, \quad 0 \leq n \leq N-1.
\end{multline*}
Now let  $\varphi$ be a twice continuously differentiable function from$(0,+\infty)$ to $\xR$, and let us multiply the first two terms of the discrete internal energy balance \eqref{eq:e_int_e} by $\varphi'(e_K^{n+1})$; switching from the conservative to the non conservative form, we get:
\begin{multline*}
\varphi'(e_K^{n+1})\ \Bigl[\frac{|K|}{\delta t} (\rho_K^{n+1}e_K^{n+1}-\rho_K^n e_K^n) + \sum_{\edge\in \edges(K)} F_{K,\edge}^n (e_\edge^n -e_K^n)\Bigr]
\\
=(T_1)_K^{n+1}+(T_2)_K^{n+1}+ |K|\,R_K^{n+1},
\end{multline*}
with
\begin{align} \nonumber &
(T_1)_K^{n+1}=\varphi'(e_K^{n+1})\ \frac{|K|}{\delta t} \rho_K^{n+1}(e_K^{n+1}-e_K^n),
\displaybreak[1] \\ \nonumber & 
(T_2)_K^{n+1}= \varphi'(e_K^n) \sum_{\edge\in \edges(K)} F_{K,\edge}^n (e_\edge^n - e_K^n),
\displaybreak[1] \\ \label{eq:def_eren} &
|K|\,R_K^{n+1}=\bigl(\varphi'(e_K^{n+1})-\varphi'(e_K^n)\bigr)\ \sum_{\edge\in \edges(K)} F_{K,\edge}^n(e_\edge^n -e_K^n).
\end{align}
The remainder term $R_K^{n+1}$ is quite similar to the remainder defined by \eqref{eq:def_ren} in the proof of Proposition \ref{prop:rho}; following the proof of \eqref{est:R}, we get that it satisfies:
\[
\norm{R}_{L^1} \leq M^2\ |\varphi''|_\infty\ \normtbv{e}\ \frac{\delta t}{\underline h_\mesh}.
\]
Now
\begin{align*}
&(T_1)_K^{n+1}= \frac{|K|}{\delta t} \rho_K^{n+1} \bigl(\varphi(e_K^{n+1})-\varphi(e_K^n) \bigr) +  |K|\,(R_1)_K^{n+1}, 
\\	
&(T_2)_K^{n+1}= \sum_{\edge\in \edges(K)} F_{K,\edge}^n \bigl( \varphi(e_\edge^n) - \varphi(e_K^n)\bigr)+ |K|\,(R_2)_K^{n+1},
\end{align*}
with:
\[\begin{array}{l}\displaystyle
|K|\,(R_1)_K^{n+1}= \frac{|K|}{\delta t} \rho_K^{n+1}\, \bigl(\varphi(e_K^n) - \varphi(e_K^{n+1}) - \varphi'(e_K^{n+1})( e_K^n-e_K^{n+1})\bigr),
\\[3ex] \displaystyle
|K|\,(R_2)_K^{n+1}= \sum_{\edge\in \edges(K)} F_{K,\edge}^n\, \bigl(\varphi(e_K^n) + \varphi'(e_K^n) (e_\edge^n - e_K^n)- \varphi(e_\edge^n)\bigr).
\end{array}\]
The remainder $(R_1)_K^{n+1}$ may be written:
\begin{equation}\label{eq:R1-e-eup}
(R_1)_K^{n+1} = \frac 1 {2\delta t} \rho_K^{n+1}\, \varphi''(e_K^{n+1/2})\ (e_K^{n+1}-e_K^n)^2,
\end{equation}
where $e_K^{n+1/2} \in \li e_K^n,\ e_K^{n+1} \ri$.
Since $\varphi$ is supposed to be convex, this term is non-negative.
Let $e_{KL}^n$ be the real number defined by Equation \eqref{eq:conv_int} (and denoted in this latter relation by $x_{KL}$) with $x_K=e^n_K$ and $x_L=e^n_L$.
Thanks to \eqref{eq:H_rho^exp}, by a computation similar to the implicit case, the remainder $R_2$ satisfies
\begin{equation}\label{oldlem:R2-e}
R_2 \geq \delta\! R_2 \mbox{ and} \quad \norm{\delta\! R_2}_{L^1} \leq 3 M^2\ |\varphi'|_\infty\, \normxbv{e}\ h_\mesh.
\end{equation}
Switching back from the non-conservative formulation to the conservative formulation yields:
\begin{multline}
\varphi'(e_K^{n+1})\ \frac{|K|}{\delta t} \rho_K^{n+1} (e_K^{n+1} - e_K^n)
+ \varphi'(e_K^n) \Bigl[ \sum_{\edge\in \edges(K)} F_{K,\edge}^n (e_\edge^n - e_K^n) \Bigr] \ge 
\\
\frac{|K|}{\delta t} \bigl(\rho_K^{n+1}\varphi(e_K^{n+1})-\rho_K^n\, \varphi(e_K^n) \bigr)
+ \sum_{\edge\in \edges(K)} F_{K,\edge}^n\, \varphi(e_\edge^n)
+ |K|(R_2)_K^{n+1},
\end{multline}
which, thanks to \eqref{oldlem:R2-e}, leads to \eqref{eq:e_id_exp} and \eqref{eq:remainder-exp-e}.

\medskip
Let us now suppose that the discretization of the internal energy convection term is upwind.
In this case, we obtain for $(R_2)_K^{n+1}$:
\begin{equation}\label{eq:R2-e-eup}
|K|\,(R_2)_K^{n+1}=\frac 1 2 \sum_{\edge \in \edges(K)} (F_{K,\edge}^n)^-\, \varphi''(e_{K,\edge}^n) (e_K^n-e_L^n)^2,
\end{equation}
where $e_{K,\edge}^n \in \li e_K^n,\ e_L^n \ri$.
The remainder $R_K^{n+1}$ yields in the upwind case:
\[
|K|\, R_K^{n+1} = -\varphi''(\tilde e_K^{n+1/2})\ (e_K^{n+1}-e_K^n)\ \Bigl[ \sum_{\edge\in \edges(K)} (F_{K,\edge}^n)^- (e_L^n - e_K^n) \Bigr],
\]
where $e_K^{n+1/2} \in \li e_K^n,\ e_K^{n+1} \ri$.
So, thanks to the Young inequality:
\begin{multline*}
|K|\,|R_K^{n+1}| \leq \frac 1 2 \sum_{\edge\in \edges(K)} (F_{K,\edge}^n)^- \, \varphi''(e_{K,\edge}^n) (e_L^n - e_K^n)^2
\\ + \frac 1 2\ (e_K^{n+1}-e_K^n)^2 \sum_{\edge\in \edges(K)} (F_{K,\edge}^n)^- \frac{\varphi''(\tilde e_K^{n+1/2})^2}{\varphi''(e_{K,\edge}^n)}.
\end{multline*}
In view of the expressions \eqref{eq:R1-e-eup} and \eqref{eq:R2-e-eup} of $(R_1)_K^{n+1}$ and $(R_2)_K^{n+1}$ respectively, we obtain that $(R_1)_K^{n+1}+(R_2)_K^{n+1}+R_K^{n+1}\geq 0$ thanks to the CFL condition \eqref{eq:cfl_e}, which yields the result.
\end{proof}
 
\medskip
Theorem \ref{theo:entropie} deserves the following comments:
\bli
\item First, in the explicit case, we are able to prove neither a local nor a global discrete entropy inequality;  we only obtain some weak inequalities that allow to show the consistency of the scheme, under some conditions.
\item The convergence to zero with the space and time step of the remainders is obtained, supposing a control of discrete solutions in $L^\infty$ and discrete BV norms, in two cases: first when the ratio $\delta t/\underline h_\mesh$ tends to zero, second when the $L^{q}(0,T;W^{1,q}_\mesh)$ norm of the velocity does not blow-up too quickly with the space step.
To this respect, let us suppose that we implement a stabilization term in the momentum balance equation reading (in a pseudo-continuous setting, for short and to avoid the technicalities associated to the space discretization), for $1 \leq i \leq d$:
\begin{equation}\label{eq:mom}
\partial_t(\rho u_i) + \dive(\rho u_i \bfu) + \partial_i p - h_\mesh^\alpha \Delta_{q} u_i =0,
\end{equation}
where $\Delta_q u_i$ is such that
\[
\norm{u_i}_{W^{1,q}_\mesh}^{q} \leq C \int_\Omega -\Delta_{q} u_i\ u_i \dx,
\]
where $C$ is independent of $h_\mesh$.
This kind of viscosity term may be found in turbulence models \cite{berselli2006math,sagaut2006large}.
Multiplying \eqref{eq:mom} by $u_i$ and integrating with respect to space and time yields:
\begin{equation}\label{eq:mom_int}
\int_0^T \int_\Omega -\Delta_{q} u_i\ u_i \dx \dt =
- \int_0^T \int_\Omega \bigl(\partial_t(\rho u_i) + \dive(\rho u_i \bfu) + \partial_i p\bigr)\ u_i \dx \dt.
\end{equation}
In this relation, the right-hand side may be controlled under $L^\infty$ and BV stability assumptions (remember that, at the discrete level, the BV and $W^{1,1}$ norms are the same), and we obtain an estimate on $\norm{\bfu}_{L^{q}(0,T;W^{1,q}_\mesh)}$ which may be used in \eqref{eq:R_ent_e}.
A standard first order diffusion-like stabilizing term corresponds to $q=2$ and $\alpha=1$; it yields a bound on $h_\mesh^{1/2} \norm{\bfu}_{L^2(0,T;H^1_\mesh)}$, so that \eqref{eq:R_ent_e} becomes
\[
\norm{R_\eta}_{L^1} \leq f_\mesh\ C_\mesh\ M^{\frac{3}{2}}\ |\varphi''|_\infty\ \normtbv{\rho}^{\frac 1 2}\ \tilde C(\dfrac {\delta t}{h_\mesh})^{\frac 1 2}.
\] 
Such a stabilization is thus not sufficient to ensure that the remainder term tends to zero.
What is needed is in fact:
\[
\alpha < q-1.
\]
To avoid an over-diffusion in the momentum balance, this inequality suggests to implement a non-linear stabilization with $q>2$ which, in turn, will allow $\alpha >1$.
With such a trick, we will be able to obtain for first-order upwind schemes the desired "Lax-consistency" result: the limit of a convergent sequence of solutions, bounded in $L^\infty$ and BV norms, and obtained with space and time steps tending to zero, satisfies a weak entropy inequality.
\item We introduced in \cite{pia-13-for} a limitation process for a MUSCL-like algorithm for the transport equation, which consists in deriving an admissible interval for the approximation of the unknowns at the mesh faces, in convection terms, thanks to extrema preservation arguments.
This limitation process has been extended to the Euler equations in \cite{gas-18-mus}.
The conditions \eqref{eq:H_rho^exp} and \eqref{eq:H_e^exp} may easily be incorporated in this limitation: indeed, they also define an admissible interval, which is not disjoint from the MUSCL-like admissible interval of \cite{pia-13-for}, since the upwind value belongs to both.
A similar idea (namely restricting the choice for the face approximation in order to obtain an entropy inequality) may be found in \cite{ber-14-ent}.
\end{list}
--------------------------------------------------------------------------------------------------------
\bibliographystyle{splncs04}
\bibliography{./grancanaria}
\end{document}